\title{Solution of Linear Equations and Inequalities\\ in Idempotent Vector Spaces\thanks{International Journal of Applied Mathematics and Informatics, 2013. Vol.~7, no.~1, pp.~14-23.}
}
\author{Nikolai Krivulin\thanks{Faculty of Mathematics and Mechanics, St.~Petersburg State University, 28 Universitetsky Ave., St.~Petersburg, 198504, Russia, 
nkk@math.spbu.ru}
}
\date{}
\newtheorem{theorem}{Theorem}
\newtheorem{lemma}{Lemma}
\newtheorem{proposition}{Proposition}
\begin{document}

\maketitle

\begin{abstract}
Linear vector equations and inequalities are considered defined in terms of idempotent mathematics. To solve the equations, we apply an approach that is based on the analysis of distances between vectors in idempotent vector spaces. The approach reduces the solution of the equation to that of an optimization problem in the idempotent algebra setting. Based on the approach, existence and uniqueness conditions are established for the solution of equations, and a general solution to both linear equations and inequalities are given. Finally, a problem of simultaneous solution of equations and inequalities is also considered. 
\\

\textit{Key-Words:} idempotent vector space, linear equation, linear inequality, tropical optimization problem, existence condition
\end{abstract}

\section{Introduction}

Many applications of idempotent mathematics \cite{Vorobjev1963Theextremal,Vorobjev1967Extremal,Cuninghamegreen1979Minimax,Baccelli1993Synchronization,Kolokoltsov1997Idempotent,Litvinov1998Correspondence,Golan2003Semirings,Heidergott2006Maxplus,Butkovic2010Maxlinear} involve the solution of linear equations and inequalities defined on finite-dimensional semimodules over idempotent semifields (idempotent vector spaces). One of the problems that often arise is to solve an equation having the form
$$
A\bm{x}
=
\bm{d},
$$
where $A$ and $\bm{d}$ are given matrix and vector, $\bm{x}$ is an unknown vector, and multiplication is thought of in terms of idempotent algebra. Since the equation can be considered as representing linear dependence between vectors, the development of reasonable solution to the problem holds both practical and theoretical interest. Of particular importance are the methods that offer solutions in a compact vector form suitable for development of efficient computational algorithms and related software tools, including those intended for implementation in vector and parallel computers. 

Along with the above equation, an inequality
$$
A\bm{x}
\leq
\bm{d}
$$
that is considered component-wise constitutes another problem of interest in the idempotent algebra setting.

Among the early investigations of the problem of solving the equation and of its interplay with linear dependence of vectors are the works \cite{Vorobjev1963Theextremal,Vorobjev1967Extremal,Cuninghamegreen1979Minimax}. Further development of the question is given in many studies, including   \cite{Korbut1965Extremal,Korbut1972Extremal,Zimmermann1981Linear,Olsder1988Cramer,Cohen1989Algebraic,Baccelli1993Synchronization,Butkovic2010Maxlinear}.

To solve the equation where the matrix has no (idempotent) zero entries, an approach based on the concept of a covering set of rows for the matrix $A$ is proposed in \cite{Vorobjev1963Theextremal,Vorobjev1967Extremal}. With the approach, existence conditions are established and a procedure to find all solutions of the equation is described in terms of the covering sets. The maximum solution to the equation is given in the form $\bm{x}=A^{-}\otimes\bm{d}$, where $A^{-}$ is a pseudoinverse matrix  in the initial idempotent semimodule (called there extremal inverse matrix), and $\otimes$ denotes matrix-vector multiplication in a dual semimodule. In \cite{Korbut1965Extremal,Korbut1972Extremal}, the above approach is extended to investigate linear dependence in idempotent semimodules.
  
The development of the theory and methods in \cite{Cuninghamegreen1979Minimax} is aimed in particular at the solution of equations when the matrix $A$ may have zero entries. The operation of pseudoinversion is extended to such matrices (the matrix $A^{-}$ is called conjugate to $A$). For the solution, existence conditions in the form of an equality $A(A^{-}\otimes\bm{d})=\bm{d}$, where $\otimes$ is the multiplication in a dual semimodule, are given and uniqueness conditions are established. A procedure is proposed to determine the linear dependence between vectors. The results are further developed in \cite{Butkovic2010Maxlinear} to offer a combinatorial and an algebraic techniques for the solution of equations.

In \cite{Zimmermann1981Linear,Cohen1989Algebraic,Baccelli1993Synchronization}, a notion of a subsolution to the equation is introduced as any vector $\bm{x}$ that satisfies the condition $A\bm{x}\leq\bm{d}$. A residuation operation $\backslash$ is defined so that $A\backslash\bm{d}$ represents the maximal subsolution of the equation. It is shown that when an ordinary solution exists, it can be written in terms of a dual semimodule and then $A\backslash\bm{d}=A^{-}\otimes\bm{d}$. For an extended equation $A\bm{x}\oplus\bm{b}=\bm{d}$, where $\oplus$ denotes idempotent vector addition, a necessary and sufficient condition for the existence of its subsolutions is given in \cite{Cohen1989Algebraic,Baccelli1993Synchronization} in the form of an inequality $\bm{b}\leq\bm{d}$ that however suggests only necessary conditions for the actual solution.

Another approach that is based on the application of an idempotent analogue for the matrix determinant, known as dominant, is proposed in \cite{Olsder1988Cramer}. A solution technique is developed which uses Cramer's rule with the dominant in place of determinant. The implementation of the approach requires, however, that some sufficient constraints for both the matrix $A$ and the vector $\bm{d}$ to satisfy. 

In this paper another solution approach is described which uses the analysis of distances between vectors in idempotent vector spaces. As a metric, we take a distance function that involves only main binary operations of the semimodule supplemented with the operation of pseudoinversion. This allows to represent subsequent results in a compact vector form only in terms of the initial semimodule and give them clear and natural geometrical interpretation in the plane with the Cartesian coordinates. The results presented are based on implementation and further refinement of solutions that were first published in the papers \cite{Krivulin2005Onsolution,Krivulin2009Onsolution,Krivulin2009Methods} (see also \cite{Krivulin2012Asolution}) and were not fully available in English.

We start with a brief overview of preliminary algebraic definitions and results. Furthermore, the problem of solving the equation under study reduces to an optimization problem of finding the minimal distance from a vector to a linear span of vectors. We derive a comprehensive solutions to the optimization problem under quite general conditions. The obtained results are applied to give existence and uniqueness condition as well as to offer a general solution of the equation. Furthermore, a complete solution to the inequality is given. Finally, a problem of simultaneous solution of equations and inequalities is also considered.

\section{Preliminaries}

In this section, we present algebraic definitions, notations, and results based on \cite{Krivulin2009Onsolution,Krivulin2009Methods} to provide a background for subsequent analysis and solutions. Additional details and further results can be found in 
\cite{Vorobjev1967Extremal,Cuninghamegreen1979Minimax,Baccelli1993Synchronization,Kolokoltsov1997Idempotent,Litvinov1998Correspondence,Golan2003Semirings,Heidergott2006Maxplus,Butkovic2010Maxlinear}.

\subsection{Idempotent Semifield}

We consider a set $\mathbb{X}$ endowed with addition $\oplus$ and multiplication $\otimes$ and equipped with the zero $\mathbb{0}$ and the identity $\mathbb{1}$. The system $\langle\mathbb{X},\mathbb{0},\mathbb{1},\oplus,\otimes\rangle$ is assumed to be a linearly ordered radicable commutative semiring with idempotent addition and invertible multiplication, and it is commonly called idempotent semifield.

Idempotency of addition implies that $x\oplus x=x$ for all $x\in\mathbb{X}$. For any $x\in\mathbb{X}_{+}$, where $\mathbb{X}_{+}=\mathbb{X}\setminus\{\mathbb{0}\}$, there exists an inverse $x^{-1}$ such that $x^{-1}\otimes x=\mathbb{1}$. Furthermore, the power $x^{q}$ is defined for any $x\in\mathbb{X}_{+}$ and a rational $q$. Specifically, for any integer $p\geq0$, we have
$$
x^{0}
=\mathbb{1},
\qquad
x^{p}
=
x^{p-1}x,
\qquad
x^{-p}
=
(x^{-1})^{p}.
$$

In what follows, we drop the multiplication sign $\otimes$ and use the power notation only in the above sense.

The linear order defined on $\mathbb{X}$ is assumed to be consistent with a partial order that is induced by idempotent addition to involve that $x\leq y$ if and only if $x\oplus y=y$. From the last definition it follows that addition possesses an extremal property in the form of inequalities
$$
x\leq x\oplus y,
\qquad
y\leq x\oplus y,
$$
and that both addition and multiplication are isotonic.

Below, the relation symbols and the operator $\min$ are thought in terms of the above linear order. 

Note that we have $x\geq\mathbb{0}$ for all $x\in\mathbb{X}$. We also assume that the set $\mathbb{X}$ includes (or can be extended by) a maximal element $\infty$ such that $x\leq\infty$ for all $x\in\mathbb{X}$.

Examples of the linearly ordered radicable idempotent semifield under consideration include 
\begin{align*}
\mathbb{R}_{\max,+}
&=
\langle\mathbb{R}\cup\{-\infty\},-\infty,0,\max,+\rangle,
\\
\mathbb{R}_{\min,+}
&=
\langle\mathbb{R}\cup\{+\infty\},+\infty,0,\min,+\rangle,
\\
\mathbb{R}_{\max,\times}
&=
\langle\mathbb{R}_{+}\cup\{0\},0,1,\max,\times\rangle,
\\
\mathbb{R}_{\min,\times}
&=
\langle\mathbb{R}_{+}\cup\{+\infty\},+\infty,1,\min,\times\rangle,
\end{align*}
where $\mathbb{R}$ is the set of real numbers, $\mathbb{R}_{+}=\{x\in\mathbb{R}|x>0\}$.

Specifically, the semifield $\mathbb{R}_{\max,+}$ has its null and identity defined as $\mathbb{0}=-\infty$ and $\mathbb{1}=0$. For each $x\in\mathbb{R}$, there exists an inverse $x^{-1}$ equal to $-x$ in conventional arithmetic. For any $x,y\in\mathbb{R}$, the power $x^{y}$ corresponds to the arithmetic product $xy$. The order induced by the idempotent addition coincides with the natural linear order on $\mathbb{R}$. The maximal element is given by $+\infty$.

In $\mathbb{R}_{\min,\times}$, we have $\mathbb{0}=+\infty$ and $\mathbb{1}=1$. The inverse and power notations have the same interpretation as in the conventional algebra. The relation $\leq$ defines a reverse order to the linear order on $\mathbb{R}$. The role of the maximal element is performed by $0$. 

The semifields $\mathbb{R}_{\max,+}$, $\mathbb{R}_{\min,+}$, $\mathbb{R}_{\max,\times}$, and $\mathbb{R}_{\min,\times}$ are isomorphic to each other. Fig.~\ref{F-SRI} offers a diagram that represents isomorphism maps for these semifields. 
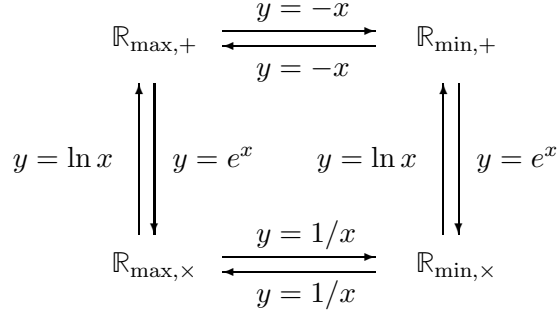
\begin{figure}[ht]
\setlength{\unitlength}{1mm}
\begin{center}

\begin{picture}(70,45)

\put(13,35){$\mathbb{R}_{\max,+}$}

\put(32,39){$y=-x$}
\put(28,37){\vector(1,0){20}}

\put(48,35){\vector(-1,0){20}}
\put(32,31){$y=-x$}

\put(53,35){$\mathbb{R}_{\min,+}$}

\put(0,19){$y=\ln x$}
\put(17,10){\vector(0,1){20}}
\put(19,30){\vector(0,-1){20}}
\put(21,19){$y=e^{x}$}

\put(40,19){$y=\ln x$}
\put(57,10){\vector(0,1){20}}
\put(59,30){\vector(0,-1){20}}
\put(61,19){$y=e^{x}$}

\put(13,5){$\mathbb{R}_{\max,\times}$}

\put(32,9){$y=1/x$}
\put(28,7){\vector(1,0){20}}
\put(48,5){\vector(-1,0){20}}
\put(32,1){$y=1/x$}

\put(53,5){$\mathbb{R}_{\min,\times}$}

\end{picture}
\caption{Isomorphism of $\mathbb{R}_{\max,+}$, $\mathbb{R}_{\min,+}$, $\mathbb{R}_{\max,\times}$, and $\mathbb{R}_{\min,\times}$.}\label{F-SRI}
\end{center}

\end{figure}

As an example of an idempotent semiring that is not a simifield, one can consider
$$
\mathbb{R}_{\max,\min}
=
\langle\mathbb{R}\cup\{-\infty,+\infty\},-\infty,+\infty,\max,\min\rangle.
$$
In this semiring, it holds that $\mathbb{0}=-\infty$ and $\mathbb{1}=+\infty$. Inverse elements with respect to the multiplication $\otimes$ defined to be $\min$ do not exist, whereas the power notation is undefined. The order induced by the addition $\oplus$ corresponds to the natural linear order. The maximal element is $+\infty$.

Now we introduce a distance function $\rho$ on $\mathbb{X}$ as follows. For any $x,y\in\mathbb{X}_{+}$, we define
$$
\rho(x,y)
=
y^{-1}x\oplus x^{-1}y.
$$

Since the function $\rho$ takes values in the segment $[\mathbb{1},\infty)$, it is natural to put $\rho(x,y)=\mathbb{1}$ when $x=y=\mathbb{0}$. We also assume that $\rho(x,y)=\infty$ if one of the arguments $x$ and $y$ is zero while the other is not.

In the semifield $\mathbb{R}_{\max,+}$ for all $x,y\in\mathbb{R}$, the function $\rho$ coincides with the ordinary distance $d(x,y)=|x-y|$. Due to the isomorphism  between semifields, the function $\rho$ induces a distance function in each semifield $\mathbb{R}_{\max,\times}$, $\mathbb{R}_{\min,+}$, and $\mathbb{R}_{\min,\times}$. Specifically, in $\mathbb{R}_{\max,\times}$, we have
$$
\rho^{\prime}(x,y)
=
\ln(y^{-1}x\oplus x^{-1}y).
$$

The function $\rho$ possesses the symmetry property and satisfies the triangle inequality in all semifields $\mathbb{R}_{\max,+}$, $\mathbb{R}_{\max,\times}$, $\mathbb{R}_{\min,+}$, and $\mathbb{R}_{\min,\times}$. Moreover, for every such semifield, the function can always be converted into an actual metric by scaling its value by an appropriate isomorphism. Below, for simplicity, we leave out the isomorphism maps, and take the function $\rho$ as a metric for all semifields.

\subsection{Idempotent Vector Space}

Consider the Cartesian power $\mathbb{X}^{m}$ with column vectors as its elements. A vector with all elements equal to $\mathbb{0}$ is the zero vector. A vector is regular if it has no zero elements. 

For any two vectors $\bm{a}=(a_{i})$ and $\bm{b}=(b_{i})$ in $\mathbb{X}^{m}$, and a scalar $x\in\mathbb{X}$, addition and scalar multiplication are defined component-wise as follows
$$
\{\bm{a}\oplus\bm{b}\}_{i}
=
a_{i}\oplus b_{i},
\qquad
\{x\bm{a}\}_{i}
=
xa_{i}.
$$

Endowed with these operations, the set $\mathbb{X}^{m}$ forms a semimodule over the idempotent semifield $\mathbb{X}$ and it is referred to as the idempotent vector space.

Fig.~\ref{F-VA} gives geometrical illustrations of the addition in the space $\mathbb{R}_{\max,+}^{2}$ with the Cartesian coordinates on the plane. Note that the left  example can equally refer to $\mathbb{R}_{\max,\times}^{2}$.
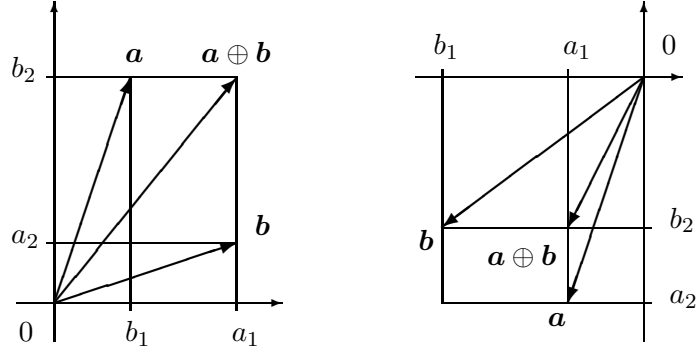
\begin{figure}[ht]
\setlength{\unitlength}{1mm}
\begin{center}
\begin{picture}(35,45)

\put(0,5){\vector(1,0){35}}
\put(5,0){\vector(0,1){45}}

\put(5,5){\thicklines\vector(1,3){10}}
\put(15,35){\line(0,-1){31}}

\put(5,5){\thicklines\vector(3,1){24}}
\put(29,13){\line(-1,0){25}}

\put(5,5){\thicklines\line(4,5){24}}
\put(26,31.75){\thicklines\vector(1,1){3}}

\put(29,35){\line(-1,0){25}}

\put(29,35){\line(0,-1){31}}

\put(0,0){$0$}

\put(14,0){$b_{1}$}
\put(28,0){$a_{1}$}

\put(-1,13){$a_{2}$}
\put(-1,35){$b_{2}$}

\put(14,37){$\bm{a}$}

\put(31,14){$\bm{b}$}

\put(24,37){$\bm{a}\oplus\bm{b}$}

\end{picture}
\hspace{15\unitlength}
\begin{picture}(35,45)

\put(0,35){\vector(1,0){35}}
\put(30,0){\vector(0,1){45}}

\put(30,35){\thicklines\vector(-1,-3){10}}
\put(3.5,5){\line(1,0){27.5}}
\put(20,5){\line(0,1){31}}

\put(30,35){\thicklines\vector(-4,-3){26.5}}
\put(3.5,15){\line(1,0){27.5}}
\put(3.5,5){\line(0,1){31}}

\put(30,35){\thicklines\vector(-1,-2){10}}

\put(32,38){$0$}

\put(19,38){$a_{1}$}
\put(33,5){$a_{2}$}

\put(2,38){$b_{1}$}
\put(33,15){$b_{2}$}

\put(0,12){$\bm{b}$}

\put(17,2){$\bm{a}$}

\put(9,10){$\bm{a}\oplus\bm{b}$}

\end{picture}
\end{center}
\caption{Vector addition in $\mathbb{R}_{\max,+}^{2}$.}\label{F-VA}
\end{figure}

Idempotent addition of two vectors in $\mathbb{R}_{\max,+}^{2}$ follows the ``rectangle rule'' that defines the sum as the upper right vertex of a rectangle formed by  lines drawn through the end points of the vectors, parallel to the coordinate axes.

Scalar multiplication of vectors in $\mathbb{R}_{\max,+}^{2}$ is equivalent to shifting the end point of the vector in the direction at $45^{\circ}$ to the axes (see Fig.~\ref{F-SM}, left), whereas in $\mathbb{R}_{\max,\times}^{2}$ it has conventional geometrical interpretation (Fig.~\ref{F-SM}, right).
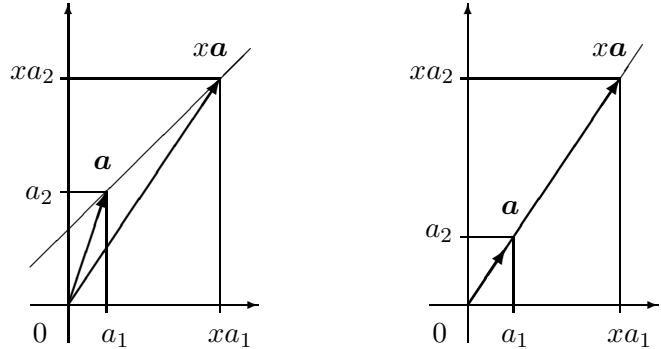
\begin{figure}[ht]
\setlength{\unitlength}{1mm}
\begin{center}

\begin{picture}(35,45)

\put(5,5){\vector(1,0){30}}
\put(10,0){\vector(0,1){45}}

\put(10,5){\thicklines\vector(1,3){5}}
\put(15,20){\line(0,-1){16}}
\put(15,20){\line(-1,0){6}}

\put(10,5){\thicklines\vector(2,3){20}}
\put(30,35){\line(0,-1){31}}
\put(30,35){\line(-1,0){21}}

\put(5,10){\line(1,1){29}}

\put(5,0){$0$}
\put(13,23){$\bm{a}$}
\put(26,38){$x\bm{a}$}

\put(4,19){$a_{2}$}
\put(2,35){$xa_{2}$}

\put(14,0){$a_{1}$}
\put(28,0){$xa_{1}$}

\end{picture}
\hspace{15\unitlength}
\begin{picture}(35,45)

\put(5,5){\vector(1,0){30}}
\put(10,0){\vector(0,1){45}}

\put(10,5){\thicklines\vector(2,3){5}}
\put(16,14){\line(0,-1){10}}
\put(16,14){\line(-1,0){7}}

\put(10,5){\thicklines\vector(2,3){20}}
\put(30,35){\line(0,-1){31}}
\put(30,35){\line(-1,0){21}}

\put(10,5){\line(2,3){23}}

\put(5,0){$0$}
\put(14,17){$\bm{a}$}
\put(26,38){$x\bm{a}$}

\put(4,14){$a_{2}$}
\put(2,35){$xa_{2}$}

\put(14,0){$a_{1}$}
\put(28,0){$xa_{1}$}

\end{picture}

\end{center}
\caption{Scalar multiplication in $\mathbb{R}_{\max,+}^{2}$ (left) and in $\mathbb{R}_{\max,\times}^{2}$ (right).}\label{F-SM}
\end{figure}

For any vectors $\bm{a},\bm{b}\in\mathbb{X}^{m}$, the extremal property of addition leads to component-wise vector inequalities
$$
\bm{a}
\leq
\bm{a}\oplus\bm{b},
\qquad
\bm{b}
\leq
\bm{a}\oplus\bm{b}.
$$ 

Furthermore, vector and scalar inequalities $\bm{a}\leq\bm{b}$ and $x\leq y$ imply that the inequalities 
$$
\bm{a}\oplus\bm{c}
\leq
\bm{b}\oplus\bm{c},
\qquad
x\bm{a}
\leq
x\bm{b},
\qquad
x\bm{c}
\leq
y\bm{c}
$$
are valid for all $\bm{c}\in\mathbb{X}^{m}$, and so both vector addition and scalar multiplication are isotone in each argument.

\subsection{Matrix Algebra}

Consider matrices having entries in $\mathbb{X}$. For conforming matrices $A=(a_{ij})$, $B=(b_{ij})$, and $C=(c_{ij})$, matrix addition and multiplication together with multiplication by a scalar $x\in\mathbb{X}$ follow the conventional rules 
\begin{gather*}
\{A\oplus B\}_{ij}
=
a_{ij}\oplus b_{ij},
\qquad
\{B C\}_{ij}
=
\bigoplus_{k}b_{ik}c_{kj},
\\
\{xA\}_{ij}=xa_{ij}.
\end{gather*}

Specifically, a matrix $A=(a_{ij})\in\mathbb{X}^{m\times n}$ is multiplied by a vector $\bm{x}=(x_{i})\in\mathbb{X}^{n}$ to result in a vector with elements
$$
\{A\bm{x}\}_{i}
=
a_{i1}x_{1}\oplus\cdots\oplus a_{in}x_{n}.
$$

The matrix operations is component-wise isotone in each argument.

A matrix with all entries equal to zero is called the zero matrix and denoted by $\mathbb{0}$.

A matrix is row regular (column regular) if it has no zero rows (columns).

A square matrix is diagonal if its off-diagonal entries are zero. The diagonal matrix $I=\mathop\mathrm{diag}(\mathbb{1},\ldots,\mathbb{1})$ is the identity.

For any nonzero column vector $\bm{x}=(x_{i})\in\mathbb{X}^{n}$, we define a row vector $\bm{x}^{-}=(x_{i}^{-})$, where $x_{i}^{-}=x_{i}^{-1}$ if $x_{i}\ne\mathbb{0}$, and $x_{i}^{-}=\mathbb{0}$ otherwise.

If $\bm{x}$ is a nonzero vector, then $\bm{x}^{-}\bm{x}=\mathbb{1}$.

Suppose that both $\bm{x}$ and $\bm{y}$ are regular vectors. The component-wise inequality $\bm{x}\leq\bm{y}$ implies $\bm{x}^{-}\geq\bm{y}^{-}$. Furthermore, it is not difficult to verify the inequality
\begin{equation}
\bm{x}\bm{y}^{-}\geq(\bm{x}^{-}\bm{y})^{-1}I.
\label{I-xyxyI}
\end{equation}

Indeed, since $\bm{x}^{-}\bm{y}=x_{1}^{-1}y_{1}\oplus\cdots\oplus x_{n}^{-1}y_{n}\geq x_{i}^{-1}y_{i}$, we have $x_{i}y_{i}^{-1}\geq(\bm{x}^{-}\bm{y})^{-1}$ for all $i=1,\ldots,n$, and so
$$
\bm{x}\bm{y}^{-}
\geq
\mathop{\mathrm{diag}}(x_{1}y_{1}^{-1},\ldots,x_{n}y_{n}^{-1})
\geq
(\bm{x}^{-}\bm{y})^{-1}I.
$$

When $\bm{y}=\bm{x}$ the inequality \eqref{I-xyxyI} takes the form $\bm{x}\bm{x}^{-}\geq I$. By applying \eqref{I-xyxyI} in this form, one can get the inequality


\begin{equation}
(A\bm{x})^{-}A
\leq
\bm{x}^{-},
\label{I-AxAx}
\end{equation}
which is valid for any nonzero matrix $A\in\mathbb{X}^{m\times n}$. In fact, if $\bm{x}\in\mathbb{X}_{+}^{n}$, then we have $(A\bm{x})^{-}A\leq(A\bm{x})^{-}A\bm{x}\bm{x}^{-}=\bm{x}^{-} $.

\subsection{Linear Dependence}

Consider a system of vectors $\bm{a}_{1},\ldots,\bm{a}_{n}\in\mathbb{X}^{m}$. As usual, a vector $\bm{b}\in\mathbb{X}^{m}$ is linearly dependent on the system if it admits representation as a linear combination
$$
\bm{b}
=
x_{1}\bm{a}_{1}\oplus\cdots\oplus x_{n}\bm{a}_{n}
$$
with coefficients $x_{1},\ldots,x_{n}\in\mathbb{X}$.

The linear span of $\bm{a}_{1},\ldots,\bm{a}_{n}$ is defined as the set of all linear combinations that form an idempotent subspace
$$
\mathop\mathrm{span}(\bm{a}_{1},\ldots,\bm{a}_{m})
=
\left\{\left.\bigoplus_{i=1}^{m}x_{i}\bm{a}_{i}\right|x_{1},\ldots,x_{m}\in\mathbb{X}\right\}.
$$

Geometrical examples of linear spans are given in Fig.~\ref{F-LC}.
\begin{figure}[ht]
\setlength{\unitlength}{1mm}
\begin{center}

\begin{picture}(35,45)

\put(0,5){\vector(1,0){35}}
\put(5,0){\vector(0,1){45}}

\put(5,5){\thicklines\vector(1,3){8}}
\put(5,5){\thicklines\vector(1,2){16}}

\put(5,5){\thicklines\vector(4,1){16}}
\put(5,5){\thicklines\vector(2,1){24}}

\put(0,16){\thicklines\line(1,1){24}}
\multiput(1,17)(1,1){23}{\line(1,0){1}}

\put(12,0){\thicklines\line(1,1){22}}
\multiput(13,1)(1,1){21}{\line(-1,0){1}}

\put(29,37){\line(-1,0){25}}
\put(29,37){\line(0,-1){33}}

\put(13,29){\line(-1,0){9}}
\put(21,9){\line(0,-1){5}}

\put(5,5){\thicklines\vector(3,4){24}}

\put(0,0){$0$}

\put(9,31){$\bm{a}_{2}$}
\put(0,33){$x_{2}$}
\put(13,39){$x_{2}\bm{a}_{2}$}

\put(23,7){$\bm{a}_{1}$}
\put(24,0){$x_{1}$}
\put(31,15){$x_{1}\bm{a}_{1}$}


\end{picture}
\hspace{15\unitlength}
\begin{picture}(35,45)

\put(0,5){\vector(1,0){35}}
\put(5,0){\vector(0,1){45}}

\put(5,5){\thicklines\vector(1,3){7}}
\put(5,5){\thicklines\vector(1,3){10}}

\put(5,5){\thicklines\line(1,3){12}}
\multiput(5,5)(0.5,1.5){24}{\line(1,0){1}}

\put(5,5){\thicklines\vector(4,1){9}}
\put(5,5){\thicklines\vector(4,1){22}}

\put(5,5){\thicklines\line(4,1){29}}
\multiput(5,5)(2,0.5){15}{\line(0,1){1}}

\put(27,34.5){\line(-1,0){23}}
\put(27,34.5){\line(0,-1){30.5}}

\put(5,5){\thicklines\vector(3,4){22}}

\put(0,0){$0$}

\put(7,27){$\bm{a}_{2}$}

\put(7,37){$x_{2}\bm{a}_{2}$}

\put(13,10){$\bm{a}_{1}$}

\put(29,15){$x_{1}\bm{a}_{1}$}


\end{picture}

\end{center}
\caption{Linear span of vectors in $\mathbb{R}_{\max,+}^{2}$ (left) and in $\mathbb{R}_{\max,\times}^{2}$ (right).}\label{F-LC}
\end{figure}
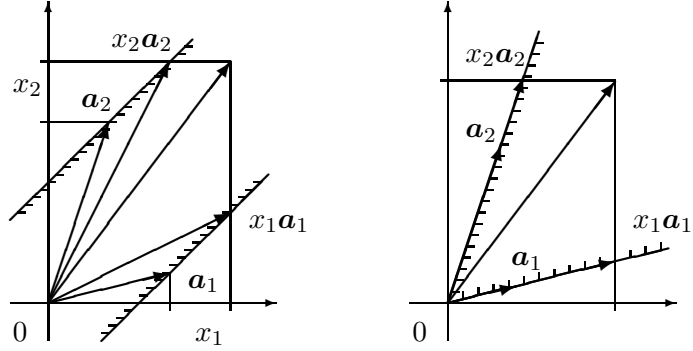

The linear span of vectors $\bm{a}_{1}$ and $\bm{a}_{2}$ in the idempotent space $\mathbb{R}_{\max,+}^{2}$ is a strip bounded by the lines drawn through the end points of the vectors (see Fig.~\ref{F-LC}, left). In $\mathbb{R}_{\max,\times}^{2}$, the linear span has the form of a cone (Fig.~\ref{F-LC}, right).

A system of vectors $\bm{a}_{1},\ldots,\bm{a}_{n}$ is linearly dependent if at least one its vector is linearly dependent on others, and it is linear independent otherwise.

A system of nonzero vectors $\bm{a}_{1},\ldots,\bm{a}_{n}$ is a minimal generating system for a vector $\bm{b}$, if $\bm{b}$ is linearly dependent on the system and independent of any of its subsystems.

Let us verify that if vectors $\bm{a}_{1},\ldots,\bm{a}_{n}$ are a minimal generating system for a vector $\bm{b}$, then representation of $\bm{b}$ as a linear combination of $\bm{a}_{1},\ldots,\bm{a}_{n}$ is unique. Suppose there are two linear combinations
$$
\bm{b}=x_{1}\bm{a}_{1}\oplus\cdots\oplus x_{n}\bm{a}_{n}=x_{1}^{\prime}\bm{a}_{1}\oplus\cdots\oplus x_{n}^{\prime}\bm{a}_{n},
$$
where $x_{i}^{\prime}\ne x_{i}$, say $x_{i}^{\prime}<x_{i}$, for some index $i=1,\ldots,n$.

Assuming, for the sake of simplicity, that the vector $\bm{a}_{i}$ is regular, we have $\bm{b}\geq x_{i}\bm{a}_{i}>x_{i}^{\prime}\bm{a}_{i}$. Therefore, the term $x_{i}^{\prime}\bm{a}_{i}$ does not affect $\bm{b}$ and so may be omitted, which contradicts with the minimality of the system $\bm{a}_{1},\ldots,\bm{a}_{n}$.

\subsection{Distance Function}

For any vector $\bm{a}\in\mathbb{X}^{m}$, we introduce its support as the index set
$$
\mathop\mathrm{supp}(\bm{a})
=
\{i|a_{i}\ne\mathbb{0},i=1,\ldots,m\}.
$$

The distance between nonzero vectors $\bm{a},\bm{b}\in\mathbb{X}^{m}$ with $\mathop\mathrm{supp}(\bm{a})=\mathop\mathrm{supp}(\bm{b})$ is defined by a function
\begin{equation}
\rho(\bm{a},\bm{b})
=
\bigoplus_{i\in\mathop\mathrm{supp}(\bm{a})}\left(b_{i}^{-1} a_{i}\oplus a_{i}^{-1} b_{i}\right)
=
\bm{b}^{-}\bm{a}\oplus\bm{a}^{-}\bm{b}.
\label{E-rhoab}
\end{equation}

We put $ \rho(\bm{a},\bm{b})=\infty$ when $\mathop\mathrm{supp}(\bm{a})\ne\mathop\mathrm{supp}(\bm{b})$, and $\rho(\bm{a},\bm{b})=\mathbb{1}$ if $\bm{a}=\bm{b}=\mathbb{0}$.

Note that the function $\rho$ in $\mathbb{R}_{\max,+}^{m}$ coincides for all vectors $\bm{a},\bm{b}\in\mathbb{R}^{m}$ with the Chebyshev metric
$$
\rho_{\infty}(\bm{a},\bm{b})
=
\max_{1\leq i\leq m}|b_{i}-a_{i}|.
$$

\section{Evaluation of Distances}

Let $\bm{a}_{1},\ldots,\bm{a}_{n}\in\mathbb{X}^{m}$ be given vectors. We denote by $A=(\bm{a}_{1},\ldots,\bm{a}_{n})$ a matrix having the vectors as columns, and by  $\mathcal{A}=\mathop\mathrm{span}\{\bm{a}_{1},\ldots,\bm{a}_{n}\}$ a linear span of the vectors.

Take a vector $\bm{d}\in\mathbb{X}^{m}$ and consider the problem of computing the distance from $\bm{d}$ to $\mathcal{A}$ defined as
$$
\rho(\mathcal{A},\bm{d})
=
\min_{\bm{a}\in\mathcal{A}}\rho(\bm{a},\bm{d}).
$$

As another problem of interest, we examine the distance from $\bm{d}$ to the sets (half-spaces)
$$
\mathcal{A}_{1}
=
\{\bm{a}\in\mathcal{A}|\bm{a}\leq\bm{d}\},
\qquad
\mathcal{A}_{2}
=
\{\bm{a}\in\mathcal{A}|\bm{a}\geq\bm{d}\}.
$$ 

Taking into account that every vector $\bm{a}\in\mathcal{A}$ can be represented as $\bm{a}=A\bm{x}$ for some vector $\bm{x}\in\mathbb{X}^{n}$, we arrive at the problem of calculating
\begin{equation}
\rho(\mathcal{A},\bm{d})
=
\min_{\bm{x}\in\mathbb{X}^{n}}\rho(A\bm{x},\bm{d}).
\label{E-rhoAd}
\end{equation}

Suppose $\bm{d}=\mathbb{0}$. Considering that $\mathcal{A}$ always contains the zero vector, we obviously get $\rho(\mathcal{A},\bm{d})=\mathbb{1}$.

Let some of the vectors $\bm{a}_{1},\ldots,\bm{a}_{n}$ be zero. Since zero vectors do not affect the linear span $\mathcal{A}$, they can be removed with no change of distances. When all vectors are zero and thus $\mathcal{A}=\{\mathbb{0}\}$, we have $\rho(\mathcal{A},\bm{d})=\mathbb{0}$ if $\bm{d}=\mathbb{0}$, and $\rho(\mathcal{A},\bm{d})=\infty$ otherwise.

From here on we assume that $\bm{d}\ne\mathbb{0}$ and $\bm{a}_{i}\ne\mathbb{0}$ for all $i=1,\ldots,n$, and so the matrix $A$ is column regular.

Suppose the vector $\bm{d}=(d_{i})$ may have zero components and so be irregular. For the matrix $A=(a_{ij})$, we introduce a matrix $\widehat{A}=(\widehat{a}_{ij})$ as follows. We define two sets of indices $I=\{i|d_{i}=\mathbb{0}\}$ and $J=\{j|a_{ij}>\mathbb{0}, i\in I\}$, and then determine the entries in $\widehat{A}$ according to the conditions
$$
\widehat{a}_{ij}
=
\begin{cases}
\mathbb{0},	& \text{if $i\notin I$ and $j\in J$}; \\
a_{ij},			& \text{otherwise}.
\end{cases}
$$

The matrix $A$ may differ from $\widehat{A}$ only in those columns that have nonzero intersections with the rows corresponding to zero components in $\bm{d}$. In the matrix $\widehat{A}$, these columns have all entries that are not located at the intersections set to zero. The matrix $\widehat{A}$ with the above properties and the vector $\bm{d}$ are said to be consistent with each other. 

Note that when $\bm{d}$ is regular, we have $\widehat{A}=A$; that is, the matrix $\widehat{A}$ obtained from the matrix $A$ to provide consistency with $\bm{d}$ appears to coincide with $A$.

\begin{proposition}\label{P-AxdAxd}
For all $\bm{x}$ it holds that
$$
\rho(A\bm{x},\bm{d})
=
\rho(\widehat{A}\bm{x},\bm{d}).
$$
\end{proposition}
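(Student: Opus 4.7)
The plan is to split on the support structure of $\bm{x}$ relative to the index set $J$, and show that in each case the two distances coincide by reducing to one of the boundary values of $\rho$.

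First, I would unpack what changes between $A$ and $\widehat{A}$. By construction, $\widehat{a}_{ij}=a_{ij}$ whenever $i\in I$ or $j\notin J$, and $\widehat{a}_{ij}=\mathbb{0}$ otherwise. In particular, for every $i\in I$ the entire $i$th row of $\widehat{A}$ equals the $i$th row of $A$, so $(\widehat{A}\bm{x})_{i}=(A\bm{x})_{i}$ for all $i\in I$ and all $\bm{x}$. The only potential disagreement is at rows $i\notin I$ and columns $j\in J$.

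I would then consider Case 1: $x_{j}=\mathbb{0}$ for every $j\in J$. In each row $k$, the sums $(A\bm{x})_{k}=\bigoplus_{j}a_{kj}x_{j}$ and $(\widehat{A}\bm{x})_{k}=\bigoplus_{j}\widehat{a}_{kj}x_{j}$ differ only in terms with $j\in J$, but those terms vanish because $x_{j}=\mathbb{0}$. Hence $A\bm{x}=\widehat{A}\bm{x}$ and the equality of distances is trivial.

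Case 2: there exists $j_{0}\in J$ with $x_{j_{0}}\ne\mathbb{0}$. By definition of $J$, one can pick an $i_{0}\in I$ with $a_{i_{0}j_{0}}>\mathbb{0}$. Then the extremal property of addition gives $(A\bm{x})_{i_{0}}\geq a_{i_{0}j_{0}}x_{j_{0}}>\mathbb{0}$, while $d_{i_{0}}=\mathbb{0}$ by $i_{0}\in I$. Thus $i_{0}\in\mathop{\mathrm{supp}}(A\bm{x})\setminus\mathop{\mathrm{supp}}(\bm{d})$, and by the convention following \eqref{E-rhoab} we get $\rho(A\bm{x},\bm{d})=\infty$. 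The same argument applies to $\widehat{A}\bm{x}$ since, by the first paragraph, $(\widehat{A}\bm{x})_{i_{0}}=(A\bm{x})_{i_{0}}>\mathbb{0}$, giving $\rho(\widehat{A}\bm{x},\bm{d})=\infty$ as well. So both distances equal $\infty$ and are again equal.

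The only mildly delicate point is the case analysis itself: one must notice that $\widehat{A}$ modifies $A$ precisely in entries that cannot be ``seen'' by a vector $\bm{d}$ whose $I$‑rows are zero, so that either $\bm{x}$ kills the modified columns (and the two products coincide) or $\bm{x}$ activates them (and both products fail to match the support of $\bm{d}$). No inequality like \eqref{I-AxAx} is needed; the argument is purely support‑bookkeeping.
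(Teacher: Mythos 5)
Your proof is correct and follows essentially the same support-bookkeeping argument as the paper, which splits on whether $\rho(A\bm{x},\bm{d})$ is finite (forcing $x_{j}=\mathbb{0}$ for $j\in J$) or infinite. Your variant --- casing directly on whether $\bm{x}$ vanishes on $J$ --- is, if anything, a slightly cleaner organization of the same idea, since in your Case~1 the products $A\bm{x}$ and $\widehat{A}\bm{x}$ coincide outright and no separate treatment of the infinite-distance situation is needed there.
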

\begin{proof}
With a regular $\bm{d}$ the statement becomes trivial and so assume $\bm{d}\ne\mathbb{0}$ to have zero components.

Suppose that $\rho(A\bm{x},\bm{d})<\infty$, which occurs only under the condition $\mathop\mathrm{supp}(A\bm{x})=\mathop\mathrm{supp}(\bm{d})$. The fulfillment of the condition is equivalent to equalities $a_{i1}x_{1}\oplus\cdots\oplus a_{in}x_{n}=\mathbb{0}$ that must be true whenever $d_{i}=\mathbb{0}$. To provide the equalities, we put $x_{j}=\mathbb{0}$ for all indices $j$ such that $a_{ij}\ne\mathbb{0}$ for at least one index $i$ with $d_{i}=\mathbb{0}$. In this case, replacing $A$ with $\widehat{A}$ leaves the value of $\rho(A\bm{x},\bm{d})<\infty$ unchanged.

Since the condition $\mathop\mathrm{supp}(A\bm{x})\ne\mathop\mathrm{supp}(\bm{d})$ implies $\mathop\mathrm{supp}(\widehat{A}\bm{x})\ne\mathop\mathrm{supp}(\bm{d})$ and vice versa, the statement is also true when $\rho(A\bm{x},\bm{d})=\infty$.
\end{proof}

With the above result, we may now concentrate only on the problems when $A$ is consistent with $\bm{d}$.

In order to describe the solution of problem \eqref{E-rhoAd}, we need the following notation. For any consistent matrix $A$ and vector $\bm{d}$, we define a residual value
$$
\Delta_{A}(\bm{d})
=
\sqrt{(A(\bm{d}^{-}A)^{-})^{-}\bm{d}}
$$
if $A$ is row regular, and $\Delta_{A}(\bm{d})=\infty$ otherwise.

In what follows, we drop subscripts and arguments in $\Delta_{A}(\bm{d})$ and write $\Delta$ if no confusion arises.
 
Below we find the solution when the vector $\bm{d}$ is regular and then extend this result to irregular vectors.
 
\subsection{Regular Vector}

Suppose that $\bm{d}$ is a regular vector. First we verify that the minimum of $\rho(A\bm{x},\bm{d})$ over $\mathbb{X}^{n}$ in \eqref{E-rhoAd} can be found by examining only regular vectors $\bm{x}\in\mathbb{X}_{+}^{n}$.

\begin{proposition}
If a vector $\bm{d}$ is regular, then
$$
\rho(\mathcal{A},\bm{d})
=
\min_{\bm{x}\in\mathbb{X}_{+}^{n}}\rho(A\bm{x},\bm{d}).
$$
\end{proposition}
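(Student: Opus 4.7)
The plan is to verify the two matching inequalities separately. The direction $\min_{\bm{x}\in\mathbb{X}^{n}}\rho(A\bm{x},\bm{d})\leq\min_{\bm{x}\in\mathbb{X}_{+}^{n}}\rho(A\bm{x},\bm{d})$ is immediate from $\mathbb{X}_{+}^{n}\subseteq\mathbb{X}^{n}$. All the work lies in the reverse inequality, which I would establish by showing: for every $\bm{x}\in\mathbb{X}^{n}$ there exists a regular $\bm{x}'\in\mathbb{X}_{+}^{n}$ with $\rho(A\bm{x}',\bm{d})\leq\rho(A\bm{x},\bm{d})$.

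First I would reduce to the case $r:=\rho(A\bm{x},\bm{d})<\infty$; otherwise both minima are $\infty$ and there is nothing to prove. Expanding $\rho(A\bm{x},\bm{d})=(A\bm{x})^{-}\bm{d}\oplus\bm{d}^{-}A\bm{x}$ componentwise and using the regularity of $\bm{d}$, I would extract the two-sided bound
$$
r^{-1}\bm{d}
\;\leq\;
A\bm{x}
\;\leq\;
r\bm{d},
$$
which also forces $A\bm{x}$ itself to be regular (in particular, every row of $A$ must have a nonzero entry at some index $j$ with $x_{j}\ne\mathbb{0}$).

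Next I would construct $\bm{x}'$ by regularizing $\bm{x}$ one coordinate at a time. Keep $x'_{j}=x_{j}$ whenever $x_{j}\ne\mathbb{0}$. For each index $j$ with $x_{j}=\mathbb{0}$, choose any positive scalar $\alpha_{j}$ satisfying $\alpha_{j}\bm{a}_{j}\leq r\bm{d}$; equivalently, $\alpha_{j}\leq r\min\{d_{i}a_{ij}^{-1}:a_{ij}\ne\mathbb{0}\}$. Column regularity of $A$ ensures $\bm{a}_{j}\ne\mathbb{0}$, and regularity of $\bm{d}$ ensures the right-hand side is strictly positive, so such an $\alpha_{j}>\mathbb{0}$ exists. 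Set $x'_{j}=\alpha_{j}$ for these indices.

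Finally I would verify that $\bm{x}'\in\mathbb{X}_{+}^{n}$ inherits the two-sided bound. By construction
$$
A\bm{x}'
=
A\bm{x}\oplus\bigoplus_{j:x_{j}=\mathbb{0}}\alpha_{j}\bm{a}_{j}
\;\leq\;
r\bm{d},
$$
and by isotonicity $A\bm{x}'\geq A\bm{x}\geq r^{-1}\bm{d}$; repacking these inequalities yields $\rho(A\bm{x}',\bm{d})\leq r$. Taking minima completes the proof.

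The only real subtlety I anticipate is ensuring that the perturbations $\alpha_{j}$ can be chosen strictly positive; since the idempotent semifield carries no topology or Archimedean structure, this has to be argued algebraically from column-regularity of $A$ together with regularity of $\bm{d}$, rather than via a limiting process.
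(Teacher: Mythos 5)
Your proposal is correct and follows essentially the same route as the paper: replace each zero component $x_{j}$ by a small positive scalar, using column regularity of $A$ and regularity of $\bm{d}$ to guarantee such a scalar exists. The only cosmetic difference is that the paper calibrates the perturbation as $\varepsilon=\min\{a_{ij}^{-1}y_{i}\}$ so that $\bm{y}=A\bm{x}$ is left literally unchanged, whereas you calibrate against $r\bm{d}$ and only conclude that the distance does not increase — both work.
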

\begin{proof}
Take a vector $\bm{y}=A\bm{x}$ such that $\rho(A\bm{x},\bm{d})$ achieves the minimum value. If $\bm{y}$ is irregular and so has zero components, then $\mathop\mathrm{supp}(\bm{y})\ne\mathop\mathrm{supp}(\bm{d})$, and thus $\rho(A\bm{x},\bm{d})=\infty$ for all $\bm{x}$, including regular vectors $\bm{x}$.

Suppose $\bm{y}=(y_{1},\ldots,y_{m})^{T}$ is regular. Assume a corresponding vector $\bm{x}$ to have a zero component, say $x_{j}=\mathbb{0}$. Now we define a set $I=\{i|a_{ij}>\mathbb{0}\}\ne\emptyset$ and find a number $\varepsilon=\min\{a_{ij}^{-1}y_{i}|i\in I\}>\mathbb{0}$.

It remains to note that with $x_{j}=\varepsilon$ in place of $x_{j}=\mathbb{0}$, all components of $\bm{y}$ together with the minimum value of $\rho(A\bm{x},\bm{d})$ remain unchanged. Therefore, to get the minimum it is suffice to examine only regular vectors $\bm{x}\in\mathbb{X}_{+}^{n}$.
\end{proof}

The next statement reveals the meaning of the residual value $\Delta=\Delta_{A}(\bm{d})$ in terms of distances.
\begin{lemma}\label{L-rhoAdr}
If a vector $\bm{d}$ is regular, then it holds that
$$
\rho(\mathcal{A},\bm{d})
=
\Delta,
$$
where the minimum is attained at
$$
\bm{x}
=
\Delta(\bm{d}^{-}A)^{-}.
$$
\end{lemma}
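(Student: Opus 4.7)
The plan is to prove the lemma in two halves: a universal lower bound $\rho(A\bm{x},\bm{d})\geq\Delta$ for every regular $\bm{x}$, followed by explicit attainment at $\bm{x}=\Delta(\bm{d}^{-}A)^{-}$. Before either half, I would dispose of the degenerate case: if $A$ has a zero row, then $A\bm{x}$ also has that zero row while the regular vector $\bm{d}$ does not, so $\mathop\mathrm{supp}(A\bm{x})\neq\mathop\mathrm{supp}(\bm{d})$ forces $\rho(A\bm{x},\bm{d})=\infty=\Delta$ for every $\bm{x}$. Hence from now on I assume $A$ is row regular, in which case $\bm{d}^{-}A$ is a regular row vector, $(\bm{d}^{-}A)^{-}$ is well defined, and $A(\bm{d}^{-}A)^{-}$ is a regular column, so $\Delta$ is finite and (by $A(\bm{d}^{-}A)^{-}\leq\bm{d}$) in fact $\Delta\geq\mathbb{1}$.

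For the lower bound, fix a regular $\bm{x}$, set $\mu=\rho(A\bm{x},\bm{d})=(A\bm{x})^{-}\bm{d}\oplus\bm{d}^{-}A\bm{x}$, and assume $\mu<\infty$ (otherwise there is nothing to prove). The two summands being scalars $\leq\mu$ unpack componentwise into the double inequality
\begin{equation*}
\mu^{-1}\bm{d}\leq A\bm{x}\leq\mu\bm{d}.
\end{equation*}
The right inequality is where the main manoeuvre lies: taking inverses gives $(A\bm{x})^{-}\geq\mu^{-1}\bm{d}^{-}$, and right-multiplying by $A$ together with the inequality \eqref{I-AxAx} yields
\begin{equation*}
\bm{x}^{-}\geq(A\bm{x})^{-}A\geq\mu^{-1}\bm{d}^{-}A,
\end{equation*}
which inverts to $\bm{x}\leq\mu(\bm{d}^{-}A)^{-}$. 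Applying $A$ on the left gives $A\bm{x}\leq\mu A(\bm{d}^{-}A)^{-}$, and combining this with the left inequality $\mu^{-1}\bm{d}\leq A\bm{x}$ I would obtain $\mu^{2}A(\bm{d}^{-}A)^{-}\geq\bm{d}$. Reading this componentwise and taking the maximum over $i$ gives $\mu^{2}\geq(A(\bm{d}^{-}A)^{-})^{-}\bm{d}=\Delta^{2}$, whence $\mu\geq\Delta$ by radicability.

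For attainment, I plug in $\bm{x}=\Delta(\bm{d}^{-}A)^{-}$ and compute the two terms of $\rho$ directly. The second is
\begin{equation*}
\bm{d}^{-}A\bm{x}=\Delta\,\bm{d}^{-}A(\bm{d}^{-}A)^{-}=\Delta,
\end{equation*}
using the identity $\bm{y}^{-}\bm{y}=\mathbb{1}$ for nonzero $\bm{y}$. The first is
\begin{equation*}
(A\bm{x})^{-}\bm{d}=\Delta^{-1}(A(\bm{d}^{-}A)^{-})^{-}\bm{d}=\Delta^{-1}\Delta^{2}=\Delta,
\end{equation*}
by definition of $\Delta$. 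Their idempotent sum is $\Delta$, matching the lower bound.

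The one step that feels like the actual content, rather than bookkeeping, is the passage from $A\bm{x}\leq\mu\bm{d}$ to $\bm{x}\leq\mu(\bm{d}^{-}A)^{-}$; everything else is chaining inverses and substituting. That step is where inequality \eqref{I-AxAx} earns its keep, and it is also the place where one must verify that $A\bm{x}$ is regular (guaranteed here because $\rho(A\bm{x},\bm{d})<\infty$ forces $\mathop\mathrm{supp}(A\bm{x})=\mathop\mathrm{supp}(\bm{d})$ and $\bm{d}$ is regular), so that taking componentwise inverses is legitimate.
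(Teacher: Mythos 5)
Your proof is correct and follows essentially the same route as the paper's: handle the non-row-regular case via support mismatch, derive $\bm{x}\leq\mu(\bm{d}^{-}A)^{-}$ from the bound $\bm{d}^{-}A\bm{x}\leq\mu$, feed that back into $(A\bm{x})^{-}\bm{d}\leq\mu$ to get $\mu^{2}\geq\Delta^{2}$, and verify attainment by direct substitution. The only cosmetic difference is that you invoke \eqref{I-AxAx} where the paper uses \eqref{I-xyxyI} in the form $\bm{x}\bm{x}^{-}\geq I$ directly, but since \eqref{I-AxAx} is itself derived from that inequality (and, like the paper, you tacitly rely on the preceding proposition to restrict to regular $\bm{x}$), the arguments are the same.
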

\begin{proof}
Suppose the matrix $A$ is not row regular. Then we have $\mathop\mathrm{supp}(A\bm{x})\ne\mathop\mathrm{supp}(\bm{d})$ and $\rho(\mathcal{A},\bm{d})=\infty$. Since, by definition, $\Delta=\infty$, the statement is true in this case.

Let $A$ be row regular. Taking into account \eqref{E-rhoab} and \eqref{E-rhoAd}, we arrive at an optimization problem to find
$$
\min_{\bm{x}\in\mathbb{X}_{+}^{n}}\ (\bm{d}^{-}A\bm{x}\oplus(A\bm{x})^{-}\bm{d}).
$$ 

Take any vector $\bm{y}=A\bm{x}$ such that $\bm{x}>\mathbb{0}$, and define
$$
r
=
\bm{d}^{-}A\bm{x}\oplus(A\bm{x})^{-}\bm{d}>\mathbb{0}.
$$

From the definition of $r$, we have two inequalities
$$
r\geq\bm{d}^{-}A\bm{x},
\qquad
r\geq(A\bm{x})^{-}\bm{d}.
$$

Right multiplication of the first inequality by $\bm{x}^{-}$ together with \eqref{I-xyxyI} give $r\bm{x}^{-}\geq\bm{d}^{-}A\bm{x}\bm{x}^{-}\geq\bm{d}^{-}A$. Then we obtain $\bm{x}\leq r(\bm{d}^{-}A)^{-}$ and $(A\bm{x})^{-}\geq r^{-1}(A(\bm{d}^{-}A)^{-})^{-}$.

Further substitution into the second inequality results in
$$
r
\geq
r^{-1}(A(\bm{d}^{-}A)^{-})^{-}\bm{d}
=
r^{-1}\Delta^{2},
$$
whence it follows directly that $r\geq\Delta$.

It remains to verify the equality $r=\Delta$ when we take $\bm{x}=\Delta(\bm{d}^{-}A)^{-}$. Indeed, substitution of this vector $\bm{x}$ gives
$$
r
=
\Delta\bm{d}^{-}A(\bm{d}^{-}A)^{-}\oplus\Delta^{-1}(A(\bm{d}^{-}A)^{-})^{-}\bm{d}
=
\Delta.
$$

Finally note that the above vector $\bm{x}$ corresponds to the vector $\bm{y}=\Delta A(\bm{d}^{-}A)^{-}\in\mathcal{A}$.
\end{proof}

Examples of a subspace $\mathcal{A}=\mathop\mathrm{span}(\bm{a}_{1},\bm{a}_{2})$ and a vector $\bm{d}$ in the idempotent space $\mathbb{R}_{\max,+}^{2}$ are given in Fig.~\ref{F-Lb}.
\begin{figure}[ht]
\setlength{\unitlength}{1mm}
\begin{center}

\begin{picture}(45,50)

\put(0,5){\vector(1,0){45}}
\put(6,0){\vector(0,1){50}}

\put(6,5){\thicklines\vector(1,4){3}}

\put(6,5){\thicklines\vector(2,-1){5}}

\put(1,9){\line(1,1){37}}
\multiput(2,10)(1,1){36}{\line(1,0){1}}
\put(1,9){\thicklines\line(1,1){37}}

\put(8.5,0){\line(1,1){33}}
\multiput(9,0.5)(1,1){33}{\line(-1,0){1}}
\put(8.5,0){\thicklines\line(1,1){33}}

\put(6,5){\thicklines\vector(3,4){18}}

\put(13,0){$\bm{a}_{1}$}
\put(8,23){$\bm{a}_{2}$}
\put(29,32){$\bm{d}$}

\put(38,39){$\mathcal{A}$}

\put(12,44){$\Delta=\mathbb{1}$}

\end{picture}
\hspace{15\unitlength}
\begin{picture}(45,50)

\put(0,5){\vector(1,0){45}}
\put(6,0){\vector(0,1){50}}

\put(6,5){\thicklines\vector(1,4){3}}

\put(6,5){\thicklines\vector(2,-1){5}}

\put(1,9){\line(1,1){37}}
\multiput(2,10)(1,1){36}{\line(1,0){1}}
\put(1,9){\thicklines\line(1,1){37}}

\put(8.5,0){\line(1,1){33}}
\multiput(9,0.5)(1,1){33}{\line(-1,0){1}}
\put(8.5,0){\thicklines\line(1,1){33}}

\put(28.5,20){\line(1,-1){9.5}}

\put(6,5){\thicklines\vector(3,2){22.5}}

\put(6,5){\thicklines\line(6,1){32}}
\put(36,10){\thicklines\vector(4,1){2}}

\multiput(28.5,20)(0,-3){5}{\line(0,-1){2}}
\put(28.5,5){\line(0,-1){1}}

\multiput(38,10.5)(0,-2.9){2}{\line(0,-1){2}}
\put(38,5){\line(0,-1){1}}

\put(13,0){$\bm{a}_{1}$}
\put(8,23){$\bm{a}_{2}$}
\put(39,11){$\bm{d}$}

\put(38,39){$\mathcal{A}$}

\put(25,23){$\bm{y}$}

\put(12,44){$\Delta>\mathbb{1}$}

\put(32,0){$\Delta$}

\end{picture}

\end{center}
\caption{A linear span $\mathcal{A}$ and a vector $\bm{d}$ in $\mathbb{R}_{\max,+}^{2}$ when $\Delta=\mathbb{1}$ (top) and $\Delta>\mathbb{1}$ (bottom).}\label{F-Lb}
\end{figure}
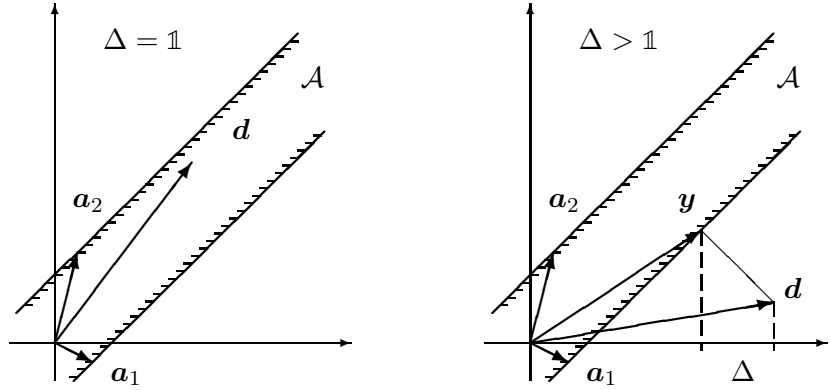

Now we turn to evaluation of the distance from the vector $\bm{d}$ to the half-spaces $\mathcal{A}_{1}$ and $\mathcal{A}_{2}$.

\begin{lemma}\label{L-rhoA1A2dr}
If a vector $\bm{d}$ is regular, then it holds that
\begin{align*}
\rho(\mathcal{A}_{1},\bm{d})
&=
\min_{A\bm{x}\leq\bm{d}}\rho(A\bm{x},\bm{d})
=
\Delta^{2},
\\
\rho(\mathcal{A}_{2},\bm{d})
&=
\min_{A\bm{x}\geq\bm{d}}\rho(A\bm{x},\bm{d})
=
\Delta^{2},
\end{align*}
where the minimum is respectively attained at
$$
\bm{x}_{1}
=
(\bm{d}^{-}A)^{-},
\qquad
\bm{x}_{2}
=
\Delta^{2}(\bm{d}^{-}A)^{-}.
$$
\end{lemma}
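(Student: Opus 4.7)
The plan is to exploit the fact that on each half-space the distance $\rho(A\bm{x},\bm{d})=\bm{d}^{-}A\bm{x}\oplus(A\bm{x})^{-}\bm{d}$ collapses to a single one of its two summands. Indeed, if $A\bm{x}\leq\bm{d}$ then $\bm{d}^{-}A\bm{x}\leq\bm{d}^{-}\bm{d}=\mathbb{1}\leq(A\bm{x})^{-}\bm{d}$, so $\rho(A\bm{x},\bm{d})=(A\bm{x})^{-}\bm{d}$; if $A\bm{x}\geq\bm{d}$ the roles reverse and $\rho(A\bm{x},\bm{d})=\bm{d}^{-}A\bm{x}$. When $A$ is not row regular, $\mathop\mathrm{supp}(A\bm{x})\neq\mathop\mathrm{supp}(\bm{d})$ for every $\bm{x}$, hence both minima equal $\infty=\Delta^{2}$ and the statement is trivial; thus I would assume throughout that $A$ is row regular, which also makes $A(\bm{d}^{-}A)^{-}$ regular.

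For $\mathcal{A}_{1}$, I would first establish the residuation identity that $A\bm{x}\leq\bm{d}$ is equivalent to $\bm{x}\leq(\bm{d}^{-}A)^{-}=\bm{x}_{1}$; this reduces component-wise to $a_{ij}x_{j}\leq d_{i}$, i.e.\ $x_{j}\leq\min_{i}d_{i}a_{ij}^{-1}=((\bm{d}^{-}A)^{-})_{j}$. Isotonicity then gives $A\bm{x}\leq A\bm{x}_{1}$, whence $(A\bm{x})^{-}\geq(A\bm{x}_{1})^{-}$ and $(A\bm{x})^{-}\bm{d}\geq(A(\bm{d}^{-}A)^{-})^{-}\bm{d}=\Delta^{2}$. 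Direct substitution shows that the value $\Delta^{2}$ is attained at $\bm{x}_{1}$, so the first infimum is exactly $\Delta^{2}$.

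For $\mathcal{A}_{2}$, the constraint $A\bm{x}\geq\bm{d}$ does not directly bound $\bm{x}$ from above, so I would adapt the device used in Lemma \ref{L-rhoAdr}. Setting $r=\bm{d}^{-}A\bm{x}$ and multiplying on the right by $\bm{x}^{-}$, inequality \eqref{I-xyxyI} applied with $\bm{y}=\bm{x}$ yields $r\bm{x}^{-}\geq\bm{d}^{-}A\bm{x}\bm{x}^{-}\geq\bm{d}^{-}A$, so $\bm{x}\leq r(\bm{d}^{-}A)^{-}$ and consequently $A\bm{x}\leq rA(\bm{d}^{-}A)^{-}$. Combining with $A\bm{x}\geq\bm{d}$ produces $\bm{d}\leq rA(\bm{d}^{-}A)^{-}$; since $(\cdot)^{-}$ reverses order on regular vectors, this becomes $(A(\bm{d}^{-}A)^{-})^{-}\leq r\bm{d}^{-}$, and right-multiplication by $\bm{d}$ together with $\bm{d}^{-}\bm{d}=\mathbb{1}$ delivers the desired lower bound $\Delta^{2}\leq r$.

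It remains to verify that $\bm{x}_{2}=\Delta^{2}(\bm{d}^{-}A)^{-}$ is feasible and attains $\Delta^{2}$. Feasibility $A\bm{x}_{2}\geq\bm{d}$ follows from the definition $\Delta^{2}=\bigoplus_{i}(A(\bm{d}^{-}A)^{-})_{i}^{-1}d_{i}$, which yields $\Delta^{2}(A(\bm{d}^{-}A)^{-})_{i}\geq d_{i}$ for every $i$. The distance at $\bm{x}_{2}$ is then $\bm{d}^{-}A\bm{x}_{2}=\Delta^{2}(\bm{d}^{-}A)(\bm{d}^{-}A)^{-}=\Delta^{2}$, using the identity $\bm{v}\bm{v}^{-}=\mathbb{1}$ for any nonzero vector $\bm{v}$. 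The main obstacle I expect is the $\mathcal{A}_{2}$ lower bound, which requires both the multiplicative trick from Lemma \ref{L-rhoAdr} and careful tracking of regularity so that $A(\bm{d}^{-}A)^{-}$ admits a well-defined conjugate; this is precisely where the row-regularity assumption on $A$ is essential.
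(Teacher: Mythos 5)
Your proof is correct in substance and follows the same overall skeleton as the paper (dispose of the non-row-regular case, observe that on each half-space $\rho$ collapses to a single summand, prove a lower bound of $\Delta^{2}$, and exhibit the attaining vector), but both key steps are carried out differently. For $\mathcal{A}_{1}$, the paper derives $\bm{x}\leq(\bm{d}^{-}A)^{-}$ by right-multiplying $A\bm{x}\leq\bm{d}$ by $\bm{x}^{-}$ and invoking $\bm{x}\bm{x}^{-}\geq I$, whereas you prove the full residuation equivalence component-wise; your version is actually more robust, since it needs no regularity assumption on $\bm{x}$ and simultaneously yields the feasibility of $\bm{x}_{1}$ (it is, in effect, Lemma~\ref{L-xdA} proved on the spot). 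For $\mathcal{A}_{2}$, the paper applies inequality \eqref{I-AxAx} to the regular vector $(\bm{d}^{-}A)^{-}$ to get $\bm{d}^{-}A\geq(A(\bm{d}^{-}A)^{-})^{-}A$ and then chains with $A\bm{x}\geq\bm{d}$, while you recycle the device of Lemma~\ref{L-rhoAdr}: set $r=\bm{d}^{-}A\bm{x}$, bound $\bm{x}\leq r(\bm{d}^{-}A)^{-}$, and conjugate. Both give the same bound; the paper's route is slightly shorter and, importantly, never touches $\bm{x}^{-}$.

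That last point is the one place where your argument needs a small patch: the step $r\bm{x}^{-}\geq\bm{d}^{-}A\bm{x}\bm{x}^{-}\geq\bm{d}^{-}A$ uses $\bm{x}\bm{x}^{-}\geq I$, which holds only for regular $\bm{x}$; a vector satisfying $A\bm{x}\geq\bm{d}$ may well have zero components. This is not fatal --- since $\bm{d}$ is regular, $A\bm{x}$ is regular, and the construction in the proposition preceding Lemma~\ref{L-rhoAdr} lets you replace any zero component of $\bm{x}$ by a suitable $\varepsilon>\mathbb{0}$ without changing $A\bm{x}$, so it suffices to bound regular $\bm{x}$ --- but you should say so explicitly, as the restriction to regular $\bm{x}$ is exactly what licenses the inequality. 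Your explicit verification that $\bm{x}_{2}$ is feasible (i.e., $A\bm{x}_{2}\geq\bm{d}$), which the paper omits, is a welcome addition.
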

\begin{proof}
Similarly as in Lemma~\ref{L-rhoAdr} we can verify the equality $\rho(\mathcal{A}_{1},\bm{d})=\rho(\mathcal{A}_{2},\bm{d})=\Delta^{2}$ provided that $A$ is not a row regular matrix. Let us show that the equality remains valid when the matrix $A$ is row regular.

By multiplying the inequality $A\bm{x}\leq\bm{d}$ by $\bm{x}^{-}$ from the right and applying \eqref{I-xyxyI} we get $A\leq A\bm{x}\bm{x}^{-}\leq\bm{d}\bm{x}^{-}$. Further multiplication by $\bm{d}^{-}$ from the left results in the inequality $\bm{d}^{-}A\leq\bm{x}^{-}$, which then gives $\bm{x}\leq(\bm{d}^{-}A)^{-}$.
 
Therefore, for any vector $A\bm{x}\in\mathcal{A}_{1}$, we have
$$
\rho(A\bm{x},\bm{d})
=
(A\bm{x})^{-}\bm{d}
\geq
(A(\bm{d}^{-}A)^{-})^{-}\bm{d}
=
\Delta^{2}.
$$

It is clear that $\rho(A\bm{x}_{1},\bm{d})=\Delta^{2}$ if $\bm{x}_{1}=(\bm{d}^{-}A)^{-}$.

Consider an arbitrary vector $A\bm{x}\in\mathcal{A}_{2}$. Application of \eqref{I-AxAx} to the vector $(\bm{d}^{-}A)^{-}$ yields $\bm{d}^{-}A\geq(A(\bm{d}^{-}A)^{-})^{-}A$. Taking into account the condition that $A\bm{x}\geq\bm{d}$, we further have
$\bm{d}^{-}A\bm{x}\geq(A(\bm{d}^{-}A)^{-})^{-}A\bm{x}\geq(A(\bm{d}^{-}A)^{-})^{-}\bm{d}$.

Now we can conclude that for $A\bm{x}\in\mathcal{A}_{2}$, it holds
$$
\rho(A\bm{x},\bm{d})
=
\bm{d}^{-}A\bm{x}
\geq
(A(\bm{d}^{-}A)^{-})^{-}\bm{d}
=
\Delta^{2}.
$$

It remains to see that substitution $\bm{x}_{2}=\Delta^{2}(\bm{d}^{-}A)^{-}$ gives $\rho(A\bm{x}_{2},\bm{d})=\Delta^{2}\bm{d}^{-}A(\bm{d}^{-}A)^{-}=\Delta^{2}$.

Note that when $\Delta<\infty$ the minimum distance from $\bm{d}$ to the half-spaces $\mathcal{A}_{1}$ and $\mathcal{A}_{2}$ is achieved at the respective vectors $\bm{y}_{1}=A(\bm{d}^{-}A)^{-}$ and $\bm{y}_{2}=\Delta^{2}A(\bm{b}^{-}A)^{-}$.
\end{proof}

A geometric illustration of the above result in the idempotent space $\mathbb{R}_{\max,+}^{2}$ is given in Fig.~\ref{F-LLb}.
\begin{figure}[ht]
\setlength{\unitlength}{1mm}
\begin{center}

\begin{picture}(45,50)

\put(0,5){\vector(1,0){45}}
\put(6,0){\vector(0,1){50}}

\put(6,5){\thicklines\vector(1,4){3}}

\put(6,5){\thicklines\vector(2,-1){5}}

\put(1,9){\line(1,1){37}}
\multiput(2,10)(1,1){19}{\line(1,0){1}}
\put(1,9){\thicklines\line(1,1){20}}

\multiput(24,32)(1,1){15}{\line(1,0){1}}
\put(24,32){\thicklines\line(1,1){15}}

\put(8.5,0){\line(1,1){32}}
\multiput(9,0.5)(1,1){16}{\line(-1,0){1}}
\put(8.5,0){\thicklines\line(1,1){15.5}}

\multiput(38.5,30)(1,1){5}{\line(-1,0){1}}
\put(37.5,29){\thicklines\line(1,1){5}}

\put(6,5){\thicklines\vector(3,4){18}}

\put(21,29){\line(1,0){16.5}}
\put(21,29){\thicklines\line(1,0){16.5}}
\multiput(21,29)(1,0){3}{\line(0,-1){1}}
\multiput(24,29)(1,0){14}{\line(0,1){1}}

\put(24,15.5){\line(0,1){16.5}}
\put(24,15.5){\thicklines\line(0,1){16.5}}
\multiput(24,15.5)(0,1){14}{\line(-1,0){1}}
\multiput(24,30)(0,1){3}{\line(1,0){1}}

\put(13,0){$\bm{a}_{1}$}
\put(8,23){$\bm{a}_{2}$}
\put(29,32){$\bm{d}$}

\put(0,0){$\mathcal{A}_{1}$}
\put(38,39){$\mathcal{A}_{2}$}

\put(12,44){$\Delta=\mathbb{1}$}

\end{picture}
\hspace{15\unitlength}
\begin{picture}(45,50)

\put(0,5){\vector(1,0){45}}
\put(6,0){\vector(0,1){50}}

\put(6,5){\thicklines\vector(1,4){3}}

\put(6,5){\thicklines\vector(2,-1){5}}

\put(1,9){\line(1,1){38}}
\multiput(1.5,9.5)(1,1){3}{\line(1,0){1}}
\put(1,9){\thicklines\line(1,1){3.6}}

\multiput(36,44)(1,1){4}{\line(1,0){1}}
\put(36,44){\thicklines\line(1,1){3.6}}

\put(8.5,0){\line(1,1){32}}

\multiput(9,0.5)(1,1){12}{\line(-1,0){1}}
\put(8.5,0){\thicklines\line(1,1){12.5}}

\multiput(37.5,29)(1,1){5}{\line(-1,0){1}}
\put(36,27.5){\thicklines\line(1,1){6}}

\multiput(4.5,12.5)(1,0){16}{\line(0,-1){1}}
\put(4.5,12.5){\thicklines\line(1,0){16.5}}

\multiput(36,44)(0,-1){16}{\line(1,0){1}}
\put(36,44){\thicklines\line(0,-1){16.5}}

\put(6,5){\thicklines\vector(4,3){30}}

\put(6,5){\thicklines\vector(2,1){15}}

\put(6,5){\thicklines\vector(4,1){30}}

\put(36,12.5){\line(-1,0){32}}
\put(36,12.5){\line(0,1){31.5}}

\multiput(21.1,12.5)(0,-3){3}{\line(0,-1){2}}
\put(21.1,5){\line(0,-1){1}}

\multiput(36,12.6)(0,-3){3}{\line(0,-1){2}}
\put(36,5){\line(0,-1){1}}

\put(14,0){$\bm{a}_{1}$}
\put(8,23){$\bm{a}_{2}$}
\put(38,12){$\bm{d}$}

\put(0,0){$\mathcal{A}_{1}$}
\put(39,39){$\mathcal{A}_{2}$}

\put(26,15){$\bm{y}_{1}$}
\put(30,30){$\bm{y}_{2}$}

\put(12,44){$\Delta>\mathbb{1}$}

\put(27,0){$\Delta^{2}$}

\end{picture}

\end{center}
\caption{The sets $\mathcal{A}_{1}$ and $\mathcal{A}_{2}$, and the vector $\bm{d}$ in $\mathbb{R}_{\max,+}^{2}$ when $\Delta=\mathbb{1}$ (top) and $\Delta>\mathbb{1}$ (bottom).}\label{F-LLb}
\end{figure}
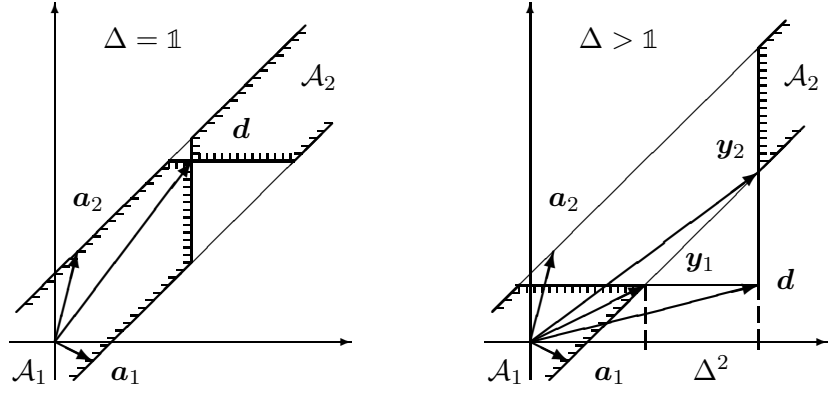

\subsection{Arbitrary Nonzero Vector}

Now we examine the distance between the linear span $\mathcal{A}$ and an arbitrary vector $\bm{d}\ne\mathbb{0}$.

\begin{theorem}\label{T-Lb}
For any vector $\bm{d}\ne\mathbb{0}$ it holds that
$$
\rho(\mathcal{A},\bm{d})
=
\min_{\bm{x}\in\mathbb{X}_{+}^{n}}\rho(A\bm{x},\bm{d})
=
\Delta,
$$
where the minimum is attained at $\bm{x}=\Delta(\bm{d}^{-}A)^{-}$.
\end{theorem}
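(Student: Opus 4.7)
The plan is to reduce the problem to the regular case treated in Lemma~\ref{L-rhoAdr}. By Proposition~\ref{P-AxdAxd}, we may assume $A = \widehat{A}$ is already consistent with $\bm{d}$. When $\bm{d}$ is regular the conclusion is immediate from Lemma~\ref{L-rhoAdr}, so let $\bm{d}$ have a nonempty set of zero components $I = \{i : d_i = \mathbb{0}\}$ with complement $\bar{I} = \mathop\mathrm{supp}(\bm{d})$, and let $J$ be as in the consistency construction with complement $\bar{J}$. The consistency condition forces $\widehat{a}_{ij} = \mathbb{0}$ for $i \in \bar{I}, j \in J$, while the very definition of $J$ forces $a_{ij} = \mathbb{0}$ for $i \in I, j \in \bar{J}$. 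Consequently, after reordering rows and columns, $\widehat{A}$ splits as a block-diagonal matrix with blocks $A_{I,J}$ and $A_{\bar{I},\bar{J}}$.

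For any $\bm{x}$ yielding $\rho(A\bm{x},\bm{d}) < \infty$ we need $\mathop\mathrm{supp}(A\bm{x}) = \bar{I}$, which, combined with column regularity of $A$ and the block decomposition, forces $x_j = \mathbb{0}$ for all $j \in J$. The problem \eqref{E-rhoAd} therefore reduces to minimizing $\rho(A_{\bar{I},\bar{J}} \bm{x}_{\bar{J}}, \bm{d}_{\bar{I}})$ over $\bm{x}_{\bar{J}} \in \mathbb{X}_{+}^{|\bar{J}|}$, where $\bm{d}_{\bar{I}}$ is regular. Applying Lemma~\ref{L-rhoAdr} to this reduced problem gives the minimum $\Delta^{\prime} = \sqrt{(A_{\bar{I},\bar{J}}(\bm{d}_{\bar{I}}^{-} A_{\bar{I},\bar{J}})^{-})^{-} \bm{d}_{\bar{I}}}$, attained at $\bm{x}_{\bar{J}} = \Delta^{\prime} (\bm{d}_{\bar{I}}^{-} A_{\bar{I},\bar{J}})^{-}$ if the reduced block is row regular, and $\Delta^{\prime} = \infty$ otherwise (which is consistent with $\Delta = \infty$, since in that case $\widehat{A}$ is not row regular and $\rho(\mathcal{A},\bm{d}) = \infty$ as well).

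The main technical obstacle — and the last step — is to verify that the formulas $\Delta_{A}(\bm{d})$ and $\bm{x} = \Delta(\bm{d}^{-}A)^{-}$ written in terms of the full matrix $A$ coincide with $\Delta^{\prime}$ and its minimizer once projected onto $\bar{J}$. Since $d_{i}^{-} = \mathbb{0}$ for $i \in I$ and $\widehat{a}_{ij} = \mathbb{0}$ for $i \in \bar{I}, j \in J$, one directly computes $(\bm{d}^{-}A)_{j} = \mathbb{0}$ for $j \in J$ and $(\bm{d}^{-}A)_{j} = (\bm{d}_{\bar{I}}^{-} A_{\bar{I},\bar{J}})_{j}$ for $j \in \bar{J}$. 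The same dichotomy propagates through $(\bm{d}^{-}A)^{-}$, and through $A(\bm{d}^{-}A)^{-}$ — using that $a_{ij} = \mathbb{0}$ for $i \in I, j \in \bar{J}$, so rows in $I$ of this product vanish and only rows in $\bar{I}$ survive. Pairing with $\bm{d}$ (whose support is $\bar{I}$) then yields $\Delta_{A}(\bm{d}) = \Delta^{\prime}$ and exhibits $\Delta(\bm{d}^{-}A)^{-}$ as the sought minimizer, completing the proof.
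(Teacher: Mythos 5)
Your proposal is correct and follows essentially the same route as the paper: reduce to a consistent matrix via Proposition~\ref{P-AxdAxd}, force $x_{j}=\mathbb{0}$ for $j\in J$, delete the rows indexed by $I$ and columns indexed by $J$ to obtain a regular reduced problem, apply Lemma~\ref{L-rhoAdr}, and check that $\Delta$ and the minimizer computed from the full matrix agree with those of the reduced problem. Your block-diagonal phrasing and the explicit component-wise verification of $(\bm{d}^{-}A)_{j}$ merely spell out details the paper asserts more briefly.
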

\begin{proof}
Note that for the case when $\bm{d}$ is regular, the proof is given in Lemma~\ref{L-rhoAdr}. Now we suppose that the vector $\bm{d}\ne\mathbb{0}$ has zero components. Due to Proposition~\ref{P-AxdAxd}, it will suffice to examine only the case when $A$ and $\bm{d}$ are consistent.

Let us define the sets of indices $I=\{i|d_{i}=\mathbb{0}\}$ and $J=\{j|a_{ij}>\mathbb{0}, i\in I\}$. In order to provide the minimum of $\rho(A\bm{x},\bm{d})$, we must put $x_{j}=\mathbb{0}$ for all $j\in J$. This makes it possible to exclude from consideration all  components of $\bm{d}$ and the rows of $A$ with indices in $I$, as well as all columns of $A$ with indices in $J$. By eliminating these elements, we obtain a new matrix $A^{\prime}$ and a new vector $\bm{d}^{\prime}$.

Denote the linear span of the columns in $A^{\prime}$ by $\mathcal{A}^{\prime}$. Considering that the vector $\bm{d}^{\prime}$ has no zero components, we apply Lemma~\ref{L-rhoAdr} to get
$$
\rho(\mathcal{A},\bm{d})
=
\rho(\mathcal{A}^{\prime},\bm{d}^{\prime})
=
\Delta_{A^{\prime}}(\bm{d}^{\prime})
=
\Delta^{\prime}.
$$

Furthermore, we note that the minimum $\rho(A^{\prime}\bm{x}^{\prime},\bm{d}^{\prime})$ is attained if  $\bm{x}^{\prime}=\Delta^{\prime}(\bm{d}^{\prime-}A^{\prime})^{-}$, where $\bm{x}^{\prime}$ is a vector of order less than $n$.

The matrix $A$ differs from $A^{\prime}$ only in that it has extra zero rows and columns. Clearly, both matrices appear to be row regular or irregular simultaneously.

Suppose that both matrices are row regular. Taking into account that the vector $\bm{d}^{\prime}$ is obtained from $\bm{d}$ by removing zero components, we have
$$
\Delta^{\prime}
=
\sqrt{(A^{\prime}(\bm{d}^{\prime-}A^{\prime})^{-})^{-}\bm{d}^{\prime}}
=
\sqrt{(A(\bm{d}^{-}A)^{-})^{-}\bm{d}}
=
\Delta.
$$

Since the optimal vector $\bm{x}$ differs from $\bm{x}^{\prime}$ only in extra zero components, we conclude that $\rho(A\bm{x},\bm{d})$ achieves minimum at $\bm{x}=\Delta(\bm{d}^{-}A)^{-}$.
\end{proof}

Using the same proof scheme as above, it is not difficult to extend the result of Lemma~\ref{L-rhoA1A2dr} as follows.
\begin{lemma}\label{L-rhoA1A2d}
For any vector $\bm{d}\ne\mathbb{0}$, it holds that
\begin{align*}
\rho(\mathcal{A}_{1},\bm{d})
&=
\min_{A\bm{x}\leq\bm{d}}\rho(A\bm{x},\bm{d})
=
\Delta^{2},
\\
\rho(\mathcal{A}_{2},\bm{d})
&=
\min_{A\bm{x}\geq\bm{d}}\rho(A\bm{x},\bm{d})
=
\Delta^{2},
\end{align*}
where the minimums are respectively attained at
$$
\bm{x}_{1}
=
(\bm{d}^{-}A)^{-},
\qquad
\bm{x}_{2}
=
\Delta^{2}(\bm{d}^{-}A)^{-}.
$$
\end{lemma}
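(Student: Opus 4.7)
The plan is to mimic, almost verbatim, the reduction argument used to prove Theorem~\ref{T-Lb}, but with the feasibility constraints $A\bm{x}\leq\bm{d}$ and $A\bm{x}\geq\bm{d}$ now present. Thanks to Proposition~\ref{P-AxdAxd}, we may assume from the outset that the matrix $A$ is consistent with $\bm{d}$, since replacing $A$ by $\widehat{A}$ preserves $\rho(A\bm{x},\bm{d})$ and also preserves both inequalities at the rows where $\bm{d}$ is zero (where both sides are zero after the replacement).

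Next I would introduce the index sets $I=\{i\mid d_{i}=\mathbb{0}\}$ and $J=\{j\mid a_{ij}>\mathbb{0},\ i\in I\}$. In order for $\rho(A\bm{x},\bm{d})$ to be finite we need $\mathop\mathrm{supp}(A\bm{x})=\mathop\mathrm{supp}(\bm{d})$, which forces $x_{j}=\mathbb{0}$ for every $j\in J$. Deleting the rows of $A$ indexed by $I$, the columns indexed by $J$, and the corresponding entries of $\bm{d}$ and $\bm{x}$, we obtain a reduced matrix $A^{\prime}$ and a regular vector $\bm{d}^{\prime}$. The crucial observation is that the constraints $A\bm{x}\leq\bm{d}$ and $A\bm{x}\geq\bm{d}$ descend to the analogous constraints $A^{\prime}\bm{x}^{\prime}\leq\bm{d}^{\prime}$ and $A^{\prime}\bm{x}^{\prime}\geq\bm{d}^{\prime}$: on rows outside $I$ nothing changes, while on rows in $I$ both sides of the original inequalities vanish identically once $x_{j}=\mathbb{0}$ for $j\in J$ is enforced.

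Applying Lemma~\ref{L-rhoA1A2dr} to the reduced, regular problem yields
$$
\rho(\mathcal{A}_{1}^{\prime},\bm{d}^{\prime})
=
\rho(\mathcal{A}_{2}^{\prime},\bm{d}^{\prime})
=
\Delta^{\prime 2},
$$
attained at $\bm{x}_{1}^{\prime}=(\bm{d}^{\prime-}A^{\prime})^{-}$ and $\bm{x}_{2}^{\prime}=\Delta^{\prime 2}(\bm{d}^{\prime-}A^{\prime})^{-}$. The identity $\Delta^{\prime}=\Delta$ follows exactly as in the proof of Theorem~\ref{T-Lb}, because deleting zero rows of $\bm{d}$ together with the corresponding rows and columns of $A$ does not affect the value of $\sqrt{(A(\bm{d}^{-}A)^{-})^{-}\bm{d}}$ (in the row-irregular case both sides are $\infty$). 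Finally, padding $\bm{x}_{1}^{\prime}$ and $\bm{x}_{2}^{\prime}$ with zero entries at positions $j\in J$ recovers precisely $\bm{x}_{1}=(\bm{d}^{-}A)^{-}$ and $\bm{x}_{2}=\Delta^{2}(\bm{d}^{-}A)^{-}$, because the pseudoinverse $(\bm{d}^{-}A)^{-}$ has zero entries exactly where $\bm{d}^{-}A$ is zero, i.e.\ at the indices in $J$.

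The only step requiring real care will be the descent of the inequality constraints to the reduced problem, i.e.\ confirming that enforcing $x_{j}=\mathbb{0}$ for $j\in J$ is both necessary for finite distance and sufficient to make the original inequalities equivalent to the reduced ones; the arithmetic identifications $\Delta^{\prime}=\Delta$ and the unpadding of the optimizers are then essentially bookkeeping.
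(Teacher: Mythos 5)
Your proof is correct and follows exactly the route the paper intends: the paper gives no separate argument for this lemma, stating only that it follows ``using the same proof scheme'' as Theorem~\ref{T-Lb}, which is precisely the consistency reduction, deletion of rows in $I$ and columns in $J$, application of Lemma~\ref{L-rhoA1A2dr} to the regular reduced problem, and the identifications $\Delta'=\Delta$ and zero-padding of the optimizers that you carry out. Your added check that the constraints $A\bm{x}\leq\bm{d}$ and $A\bm{x}\geq\bm{d}$ descend to the reduced problem (both sides vanishing on rows in $I$ once $x_j=\mathbb{0}$ for $j\in J$) is exactly the point the paper leaves implicit, and it is handled correctly.
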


To conclude this section, let us formulate a direct consequence of Theorem~\ref{T-Lb}. First note that the residual $\Delta$ satisfies the condition $\Delta\geq\mathbb{1}$. The equality $\Delta=\mathbb{1}$ means that the vector $\bm{d}$ belongs to the linear span $\mathcal{A}=\mathop\mathrm{span}\{\bm{a}_{1},\ldots,\bm{a}_{n}\}$, whereas the inequality $\Delta>\mathbb{1}$ implies that $\bm{d}$ is outside $\mathcal{A}$. In other words, the following assertion is valid.
\begin{lemma}\label{L-LH}
A vector $\bm{d}$ belongs to a linear span of columns of a matrix $A$ if and only if $\Delta=\mathbb{1}$, and if so it holds that $\bm{d}=A\bm{x}$, where $\bm{x}=(\bm{d}^{-}A)^{-}$.
\end{lemma}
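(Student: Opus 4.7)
The plan is to read this off as a direct corollary of Theorem~\ref{T-Lb}, using the fact that the metric $\rho$ achieves its minimum value $\mathbb{1}$ precisely on pairs of equal vectors with matching supports.

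First I would pin down the key characterization of equality: for any nonzero $\bm{u},\bm{v}\in\mathbb{X}^{m}$ with $\mathop\mathrm{supp}(\bm{u})=\mathop\mathrm{supp}(\bm{v})$, the definition \eqref{E-rhoab} gives $\rho(\bm{u},\bm{v})=\bm{v}^{-}\bm{u}\oplus\bm{u}^{-}\bm{v}\geq\mathbb{1}$, with equality if and only if $\bm{u}=\bm{v}$. (The lower bound uses $\bm{v}^{-}\bm{v}=\mathbb{1}$ together with isotonicity; the equality case follows because $\bm{v}^{-}\bm{u}\leq\mathbb{1}$ and $\bm{u}^{-}\bm{v}\leq\mathbb{1}$ force each component to coincide.) In particular, $\rho(\mathcal{A},\bm{d})=\mathbb{1}$ if and only if there exists $\bm{x}$ with $A\bm{x}=\bm{d}$, i.e.\ $\bm{d}\in\mathcal{A}$.

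Next I would invoke Theorem~\ref{T-Lb}, which asserts $\rho(\mathcal{A},\bm{d})=\Delta$. Combined with the previous paragraph, this yields the equivalence: $\bm{d}\in\mathcal{A}$ iff $\Delta=\mathbb{1}$. (The general lower bound $\Delta\geq\mathbb{1}$ that was highlighted just before the lemma statement is now automatic, since $\Delta=\rho(\mathcal{A},\bm{d})$ is itself a value of $\rho$.)

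For the explicit representation in the ``if'' direction, I would again appeal to Theorem~\ref{T-Lb}, which says that the minimum $\rho(A\bm{x},\bm{d})=\Delta$ is attained at $\bm{x}=\Delta(\bm{d}^{-}A)^{-}$. Setting $\Delta=\mathbb{1}$ collapses this to $\bm{x}=(\bm{d}^{-}A)^{-}$, and because the attained distance is $\mathbb{1}$, the characterization of equality from the first step forces $A\bm{x}=\bm{d}$. There is no real obstacle here; the only thing to be careful about is the degenerate support case (zero components of $\bm{d}$ or of $A$), but Proposition~\ref{P-AxdAxd} and the passage to $\widehat{A}$ already handled this in the proof of Theorem~\ref{T-Lb}, so it does not need to be revisited.
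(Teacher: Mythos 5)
Your proposal is correct and is essentially the paper's own argument: the paper presents this lemma as a direct consequence of Theorem~\ref{T-Lb}, observing that $\Delta=\rho(\mathcal{A},\bm{d})\geq\mathbb{1}$ always holds and that $\Delta=\mathbb{1}$ is precisely the condition for $\bm{d}$ to lie in $\mathcal{A}$, with the minimizer $\bm{x}=\Delta(\bm{d}^{-}A)^{-}$ collapsing to $(\bm{d}^{-}A)^{-}$. You merely make explicit the step the paper leaves implicit, namely that $\rho(\bm{u},\bm{v})=\mathbb{1}$ forces $\bm{u}=\bm{v}$, which is a correct and worthwhile addition.
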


In the next sections, we consider applications of the result to analysis of linear dependence and to solution of linear equations and inequalities.

\section{Linear Dependence}

First we give conditions for a vector $\bm{d}\in\mathbb{X}^{m}$ to be linearly dependent on vectors $\bm{a}_{1},\ldots,\bm{a}_{n}\in\mathbb{X}^{m}$, or equivalently, to admit a representation in the form of a linear combination $\bm{d}=x_{1}\bm{a}_{1}\oplus\cdots\oplus x_{n}\bm{a}_{n}$.

We define the matrix $A=(\bm{a}_{1},\ldots,\bm{a}_{n})$ and then calculate the residual $\Delta=\Delta_{A}(\bm{d})=\sqrt{(A(\bm{d}^{-}A)^{-})^{-}\bm{d}}$.

As a slight modification of Lemma~\ref{L-LH}, we arrive at the following statement.
\begin{lemma}\label{L-LD}
A vector $\bm{d}$ is linearly dependent on vectors $\bm{a}_{1},\ldots,\bm{a}_{n}$ if and only if $\Delta=\mathbb{1}$.
\end{lemma}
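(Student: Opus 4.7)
The plan is to observe that this lemma is essentially a verbatim translation of Lemma~\ref{L-LH} into the language of linear dependence, so essentially no new work is needed; the entire content has already been established.

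First I would unwind the definitions. By the definition of linear dependence given in the Linear Dependence subsection, to say that $\bm{d}$ is linearly dependent on $\bm{a}_{1},\ldots,\bm{a}_{n}$ is precisely to say that $\bm{d}$ admits a representation $\bm{d}=x_{1}\bm{a}_{1}\oplus\cdots\oplus x_{n}\bm{a}_{n}$ for some $x_{1},\ldots,x_{n}\in\mathbb{X}$. This is exactly the statement that $\bm{d}\in\mathop\mathrm{span}(\bm{a}_{1},\ldots,\bm{a}_{n})$, i.e.\ that $\bm{d}$ belongs to the linear span of the columns of $A=(\bm{a}_{1},\ldots,\bm{a}_{n})$.

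Next I would simply invoke Lemma~\ref{L-LH}, which asserts that $\bm{d}$ lies in this linear span if and only if $\Delta=\mathbb{1}$, where $\Delta=\Delta_{A}(\bm{d})=\sqrt{(A(\bm{d}^{-}A)^{-})^{-}\bm{d}}$. Composing the two equivalences yields the claim.

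There is no genuine obstacle here: the only thing to be careful about is making sure the definition of linear dependence is read correctly (it requires $\bm{d}$ to be expressible as a linear combination of the $\bm{a}_{i}$'s, with no non-triviality restriction on the coefficients, so it coincides with membership in the span). Once that is noted, the lemma is immediate. As a small bonus, Lemma~\ref{L-LH} even furnishes an explicit witnessing combination, namely $\bm{x}=(\bm{d}^{-}A)^{-}$, which could be stated as a remark after the proof.
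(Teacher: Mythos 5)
Your proposal is correct and matches the paper's own treatment: the paper derives Lemma~\ref{L-LD} as ``a slight modification of Lemma~\ref{L-LH}'', i.e.\ exactly by identifying linear dependence of $\bm{d}$ on $\bm{a}_{1},\ldots,\bm{a}_{n}$ with membership of $\bm{d}$ in the linear span of the columns of $A$ and then invoking Lemma~\ref{L-LH}. Your added remark about the explicit witness $\bm{x}=(\bm{d}^{-}A)^{-}$ is consistent with what Lemma~\ref{L-LH} already provides.
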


Now we formulate a formal criterion that a system $\bm{a}_{1},\ldots,\bm{a}_{n}$ is linearly independent. We use the notation $A_{i}=(\bm{a}_{1},\ldots,\bm{a}_{i-1},\bm{a}_{i+1},\ldots,\bm{a}_{n})$ to represent a matrix obtained from $A$ by removing column $i$, and introduce
$$
\delta(A)=\min_{1\leq i\leq n}\Delta_{A_{i}}(\bm{a}_{i}).
$$

\begin{lemma}\label{L-LIC}
The system of vectors $\bm{a}_{1},\ldots,\bm{a}_{n}$ is linearly independent if and only if $\delta(A)>\mathbb{1}$.
\end{lemma}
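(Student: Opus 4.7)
The plan is to reduce the claim directly to Lemma~\ref{L-LD} by applying it once for each index $i$, with the role of $\bm{d}$ played by $\bm{a}_{i}$ and the role of the generating system played by the remaining columns $\bm{a}_{1},\ldots,\bm{a}_{i-1},\bm{a}_{i+1},\ldots,\bm{a}_{n}$ that make up $A_{i}$. By the definition of linear independence recalled earlier in the paper, the system $\bm{a}_{1},\ldots,\bm{a}_{n}$ is linearly independent if and only if no single $\bm{a}_{i}$ is a linear combination of the others, that is, if and only if $\bm{a}_{i}$ fails to be linearly dependent on the columns of $A_{i}$ for every $i=1,\ldots,n$.

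Next I would invoke the observation, already made in the paragraph preceding Lemma~\ref{L-LH}, that the residual always satisfies $\Delta\geq\mathbb{1}$; applied to each $A_{i}$ and $\bm{a}_{i}$ this yields $\Delta_{A_{i}}(\bm{a}_{i})\geq\mathbb{1}$, and consequently $\delta(A)\geq\mathbb{1}$. Combined with Lemma~\ref{L-LD}, this gives the equivalence
$$
\bm{a}_{i}\text{ is linearly dependent on the columns of }A_{i}
\iff
\Delta_{A_{i}}(\bm{a}_{i})=\mathbb{1}
\iff
\Delta_{A_{i}}(\bm{a}_{i})\not>\mathbb{1}.
$$

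Finally, since the minimum of finitely many numbers that are each at least $\mathbb{1}$ strictly exceeds $\mathbb{1}$ precisely when every one of them does, we conclude that the system is linearly independent iff $\Delta_{A_{i}}(\bm{a}_{i})>\mathbb{1}$ for all $i$, iff $\delta(A)=\min_{1\leq i\leq n}\Delta_{A_{i}}(\bm{a}_{i})>\mathbb{1}$. Conversely, $\delta(A)=\mathbb{1}$ forces $\Delta_{A_{j}}(\bm{a}_{j})=\mathbb{1}$ for some $j$, whence $\bm{a}_{j}$ lies in the span of the other columns and the system is dependent. No step presents a real obstacle; the argument is essentially bookkeeping on top of Lemma~\ref{L-LD}, and the only point that must be stated explicitly is the bound $\Delta\geq\mathbb{1}$ which converts the equality criterion of Lemma~\ref{L-LD} into the strict inequality appearing in the statement.
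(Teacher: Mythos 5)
Your proposal is correct and follows essentially the same route as the paper: both reduce the claim to Lemma~\ref{L-LD} applied to each pair $(A_{i},\bm{a}_{i})$ and use the bound $\Delta\geq\mathbb{1}$ to pass from the equality criterion to the strict inequality $\delta(A)>\mathbb{1}$. If anything, your write-up is slightly more complete, since the paper's proof only spells out the direction from $\delta(A)>\mathbb{1}$ to independence, while you also make the contrapositive of the converse explicit.
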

\begin{proof}
Clearly, the condition $\delta(A)>\mathbb{1}$ involves that $\Delta_{A_{i}}(\bm{a}_{i})>\mathbb{1}$ for all $i=1,\ldots,n$. It follows from Lemma~\ref{L-LD} that in this case none of the vectors $\bm{a}_{1},\ldots,\bm{a}_{n}$ is a linear combination of others, and so the system of vectors is linearly independent.
\end{proof}

Let $\bm{a}_{1},\ldots,\bm{a}_{n}$ and $\bm{b}_{1},\ldots,\bm{b}_{k}$ be two systems of nonzero vectors. These systems are considered to be equivalent if each vector of one system is a linear combination of vectors of the other system.

Consider a system $\bm{a}_{1},\ldots,\bm{a}_{n}$ that can include linearly dependent vectors. To construct an equivalent independent system (a basis of the system), we implement a sequential procedure that examines the vectors one by one to decide whether to remove a vector from the system or not.

At each step $i=1,\ldots,n$, the vector $\bm{a}_{i}$ is removed if $\Delta_{\widetilde{A}_{i}}(\bm{a}_{i})=\mathbb{1}$, where the matrix $\widetilde{A}_{i}$ is composed of those columns in $A_{i}$, that are retained after the previous steps. Upon completion of the procedure, we get a new system of vectors $\widetilde{\bm{a}}_{1},\ldots,\widetilde{\bm{a}}_{k}$, where $k\leq n$.

\begin{proposition}
The system $\widetilde{\bm{a}}_{1},\ldots,\widetilde{\bm{a}}_{k}$ is a linearly independent system that is equivalent to $\bm{a}_{1},\ldots,\bm{a}_{n}$.
\end{proposition}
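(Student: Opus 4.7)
The plan is to split the statement into two parts: (i) the new system is equivalent to the original, and (ii) it is linearly independent, and to handle each using Lemma~\ref{L-LD} together with a careful bookkeeping of which vectors sit in each $\widetilde{A}_{i}$.

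For equivalence, the direction that every $\widetilde{\bm{a}}_{j}$ is a linear combination of $\bm{a}_{1},\ldots,\bm{a}_{n}$ is immediate, since $\widetilde{\bm{a}}_{j}=\bm{a}_{i_{j}}$ for some index $i_{j}$. For the converse direction I would argue by backward induction on $i\in\{n,n-1,\ldots,1\}$, showing that $\bm{a}_{i}$ lies in $\mathop\mathrm{span}(\widetilde{\bm{a}}_{1},\ldots,\widetilde{\bm{a}}_{k})$. If $\bm{a}_{i}$ is retained, this is trivial. If it is removed at step $i$, then $\Delta_{\widetilde{A}_{i}}(\bm{a}_{i})=\mathbb{1}$, so by Lemma~\ref{L-LD} the vector $\bm{a}_{i}$ is a linear combination of the columns of $\widetilde{A}_{i}$. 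The columns of $\widetilde{A}_{i}$ are of two kinds: those $\bm{a}_{j}$ with $j<i$ that have already been retained (hence are among the $\widetilde{\bm{a}}_{\ell}$'s) and those $\bm{a}_{j}$ with $j>i$ (not yet processed). By the inductive hypothesis each of the latter is in turn a linear combination of $\widetilde{\bm{a}}_{1},\ldots,\widetilde{\bm{a}}_{k}$, and substituting these expressions into the combination representing $\bm{a}_{i}$ yields the desired representation.

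For linear independence, let $\widetilde{\bm{a}}_{1},\ldots,\widetilde{\bm{a}}_{k}$ correspond to retained indices $i_{1}<i_{2}<\cdots<i_{k}$, and suppose for contradiction that some $\widetilde{\bm{a}}_{j}=\bm{a}_{i_{j}}$ is a linear combination of the other $\widetilde{\bm{a}}_{\ell}$. I would then note that each of those other vectors appears as a column of $\widetilde{A}_{i_{j}}$: the vectors $\widetilde{\bm{a}}_{1},\ldots,\widetilde{\bm{a}}_{j-1}$ were retained in the steps before $i_{j}$ and hence are among the columns of $\widetilde{A}_{i_{j}}$, while the vectors $\widetilde{\bm{a}}_{j+1},\ldots,\widetilde{\bm{a}}_{k}$ correspond to indices larger than $i_{j}$ and are therefore still present (unprocessed) as columns of $\widetilde{A}_{i_{j}}$. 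Consequently $\bm{a}_{i_{j}}$ would be a linear combination of the columns of $\widetilde{A}_{i_{j}}$, which by Lemma~\ref{L-LD} gives $\Delta_{\widetilde{A}_{i_{j}}}(\bm{a}_{i_{j}})=\mathbb{1}$, contradicting the fact that $\bm{a}_{i_{j}}$ was retained at step $i_{j}$. Finally, Lemma~\ref{L-LIC} (or the very definition of independence) converts this absence of any such linear relation into the required independence of the system $\widetilde{\bm{a}}_{1},\ldots,\widetilde{\bm{a}}_{k}$.

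The step I expect to require most care is the equivalence argument, specifically the backward induction: one has to be explicit about the fact that the columns of $\widetilde{A}_{i}$ that correspond to indices larger than $i$ are exactly the original $\bm{a}_{j}$ with $j>i$ (since they have not been processed yet), so that the inductive hypothesis may be applied to each of them. Once this bookkeeping is in place, the rest of the proof is a direct application of Lemma~\ref{L-LD}.
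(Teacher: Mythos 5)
Your proof is correct and follows the same route as the paper's: show the two systems are equivalent and then get independence from Lemma~\ref{L-LD} and Lemma~\ref{L-LIC}. The paper's own proof merely asserts that each $\bm{a}_{j}$ lies in $\mathop\mathrm{span}\{\widetilde{\bm{a}}_{1},\ldots,\widetilde{\bm{a}}_{k}\}$ and that Lemma~\ref{L-LIC} applies; your backward induction and the bookkeeping of which columns sit in $\widetilde{A}_{i}$ at step $i$ supply precisely the details the paper leaves implicit.
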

\begin{proof}
According to the way of constructing the system $\widetilde{\bm{a}}_{1},\ldots,\widetilde{\bm{a}}_{k}$, each vector $\widetilde{\bm{a}}_{i}$ coincides with a vector of the initial system $\bm{a}_{1},\ldots,\bm{a}_{n}$. Since at the same time, for each $\bm{a}_{j}$, it holds that $\bm{a}_{j}\in\mathop\mathrm{span}\{\widetilde{\bm{a}}_{1},\ldots,\widetilde{\bm{a}}_{k}\}$, both systems are equivalent. Finally, due to Lemma~\ref{L-LIC}, the system $\widetilde{\bm{a}}_{1},\ldots,\widetilde{\bm{a}}_{k}$ is linearly independent.
\end{proof}

\section{Linear Equations and Inequalities}

Suppose there are given a matrix $A\in\mathbb{X}^{m\times n}$ and a vector $\bm{d}\in\mathbb{X}^{m}$. Consider problems of finding an unknown vector $\bm{x}\in\mathbb{X}^{n}$ to satisfy the equation
\begin{equation}
A\bm{x}
=
\bm{d},
\label{E-Axd}
\end{equation}
and the inequality
\begin{equation}
A\bm{x}
\leq
\bm{d}.
\label{I-Axd}
\end{equation}

In what follows, we assume that the matrix $A$ is already put into a form that is consistent with the vector $\bm{d}$, and use the notation $\Delta=\Delta_{A}(\bm{d})=\sqrt{(A(\bm{d}^{-}A)^{-})^{-}\bm{d}}$.

If a matrix $A=(\bm{a}_{1},\ldots,\bm{a}_{n})$ has a zero column, say $\bm{a}_{i}$, then the solution of equation \eqref{E-Axd} reduces to that of an equation that is obtained from \eqref{E-Axd} by removing the component $x_{i}$ in the vector $\bm{x}$ together with eliminating the column $\bm{a}_{i}$ in $A$. Each solution of the reduced equation causes equation \eqref{E-Axd} to have a set of solutions, where $x_{i}$ takes all values in $\mathbb{X}$. The solution of inequality \eqref{I-Axd} with a matrix $A$ having a zero column reduces in the same way.

Suppose that $A=\mathbb{0}$. In this case, any vector $\bm{x}\in\mathbb{X}^{n}$ is a solution of \eqref{E-Axd} provided that $\bm{d}=\mathbb{0}$, and there is no solution otherwise. The solution of \eqref{I-Axd} is any vector $\bm{x}\in\mathbb{X}^{n}$.

If $\bm{d}=\mathbb{0}$, then both equation \eqref{E-Axd} and inequality \eqref{I-Axd} have a trivial solution $\bm{x}=\mathbb{0}$, which is unique when the matrix $A$ has no zero columns.

From here on we assume that the vector $\bm{d}$ and all columns in the matrix $A$ are nonzero, and so $A$ is column regular.

A solution $\bm{x}_{0}$ of equation \eqref{E-Axd} is called maximal if it holds that $\bm{x}\leq\bm{x}_{0}$ for any solution $\bm{x}$.

The next result gives a complete solution of inequality \eqref{I-Axd}.

\begin{lemma}\label{L-xdA}
For any column regular matrix $A$ and vector $\bm{b}\ne\mathbb{0}$, the solution of inequality \eqref{I-Axd} exists and is given by
\begin{equation}
\bm{x}
\leq
(\bm{d}^{-}A)^{-}.
\label{I-xdA}
\end{equation}
\end{lemma}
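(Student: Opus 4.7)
\emph{Approach.} Existence of a solution is immediate, since $\bm{x}=\mathbb{0}$ gives $A\bm{x}=\mathbb{0}\leq\bm{d}$. The substance of the lemma is the equivalence $A\bm{x}\leq\bm{d}\Leftrightarrow\bm{x}\leq(\bm{d}^{-}A)^{-}$, which I would establish by proving the two implications separately, following the manipulation pattern already used inside Lemma~\ref{L-rhoA1A2dr}.

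\emph{Necessity.} Assume $A\bm{x}\leq\bm{d}$. Right-multiplying by $\bm{x}^{-}$ and invoking the special case $\bm{x}\bm{x}^{-}\geq I$ of \eqref{I-xyxyI} gives $A\leq A\bm{x}\bm{x}^{-}\leq\bm{d}\bm{x}^{-}$. Left-multiplying by $\bm{d}^{-}$ then yields $\bm{d}^{-}A\leq\bm{x}^{-}$, and passing to pseudoinverses (which is order-reversing on the regular entries) delivers $\bm{x}\leq(\bm{d}^{-}A)^{-}$.

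\emph{Sufficiency.} It suffices to prove the concrete bound $A(\bm{d}^{-}A)^{-}\leq\bm{d}$, because isotonicity of matrix--vector multiplication then gives $A\bm{x}\leq A(\bm{d}^{-}A)^{-}\leq\bm{d}$ for every $\bm{x}\leq(\bm{d}^{-}A)^{-}$. For this bound I would start from $\bm{d}\bm{d}^{-}\geq I$, right-multiply by $A$ to obtain $A\leq\bm{d}(\bm{d}^{-}A)$, and finally post-multiply by $(\bm{d}^{-}A)^{-}$. The scalar identity $(\bm{d}^{-}A)(\bm{d}^{-}A)^{-}=\mathbb{1}$, which is the row-vector analogue of $\bm{x}^{-}\bm{x}=\mathbb{1}$ and holds whenever $\bm{d}^{-}A\ne\mathbb{0}$ (the case $\bm{d}^{-}A=\mathbb{0}$ being trivial since then $A(\bm{d}^{-}A)^{-}=\mathbb{0}$), collapses the right-hand side to $\bm{d}$.

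\emph{Main obstacle.} The delicate point is that \eqref{I-xyxyI} and the order-reversing property of pseudo-inversion are stated only for regular vectors, while the lemma permits $\bm{d}$ and $\bm{x}$ to have zero components. I would dispose of this by exploiting the standing assumption that $A$ is consistent with $\bm{d}$: indices $i$ with $d_i=\mathbb{0}$ force $x_j=\mathbb{0}$ on every column $j$ meeting such a row, and removing those rows and columns (in analogy with the reductions of Proposition~\ref{P-AxdAxd} and Theorem~\ref{T-Lb}) brings the problem to the regular setting in which the manipulations above apply verbatim. A shorter alternative, avoiding the reduction altogether, is a direct componentwise check: since idempotent addition preserves $\leq$ termwise, both $A\bm{x}\leq\bm{d}$ and $\bm{x}\leq(\bm{d}^{-}A)^{-}$ are equivalent to the pointwise inequalities $a_{ij}x_j\leq d_i$ for all $i,j$, from which the claimed equivalence follows by an elementary calculation.
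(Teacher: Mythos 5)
Your proof is correct and follows essentially the same route as the paper's: the same two-implication equivalence, with necessity obtained by right-multiplying $A\bm{x}\leq\bm{d}$ by $\bm{x}^{-}$ and invoking \eqref{I-xyxyI}, sufficiency via the chain $A\bm{x}\leq A(\bm{d}^{-}A)^{-}\leq\bm{d}\bm{d}^{-}A(\bm{d}^{-}A)^{-}=\bm{d}$, and the same reduction to the regular case when $\bm{d}$ has zero components. The componentwise alternative you sketch at the end is a valid shortcut (given the standing consistency assumption) but is not part of the paper's argument.
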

\begin{proof}
Let us ensure that inequalities \eqref{I-xdA} and \eqref{I-Axd} are equivalent to each other. First assume the vector $\bm{d}$ to be regular. In much the same way as in Lemma~\ref{L-rhoA1A2dr}, we verify that inequality \eqref{I-xdA} follows from \eqref{I-Axd}.

Suppose that inequality \eqref{I-xdA} holds. Then we have 
$$
A\bm{x}\leq A(\bm{b}^{-}A)^{-}\leq\bm{b}\bm{b}^{-}A(\bm{b}^{-}A)^{-}=\bm{b},
$$
and thus inequality \eqref{I-Axd} holds as well.

Now assume that the vector $\bm{d}\ne\mathbb{0}$ is not regular. In this case, we use the same proof scheme as in Theorem~\ref{T-Lb} to reduce the problem to that with a regular vector and then apply the above result.
\end{proof}

In the following, we examine conditions for the solution of equation \eqref{E-Axd} to exist and to be unique, and then describe the general solution to the equation. 

\subsection{Existence and Uniqueness of Solution}

Application of previous results brings us to a position to arrive at the next assertion. 
\begin{theorem}\label{T-EAxd}
For any column regular matrix $A$ and nonzero vector $\bm{b}$, the following statements are true:
\begin{enumerate}
\item Equation \eqref{E-Axd} has solutions if and only if $\Delta=\mathbb{1}$.
\item If solvable, the equation has a maximal solution
$$
\bm{x}
=
(\bm{d}^{-}A)^{-}.
$$
\item If all columns in $A$ form a minimal system that generates $\bm{d}$, then the above solution is unique.
\end{enumerate}
\end{theorem}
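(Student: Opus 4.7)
The plan is to assemble the three assertions directly from results already established, treating the theorem as a synthesis of Lemma~\ref{L-LH}, Lemma~\ref{L-xdA}, and the uniqueness remark from the Linear Dependence subsection.

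For statement~(1), I would observe that equation \eqref{E-Axd} is solvable precisely when $\bm{d}$ lies in the linear span $\mathcal{A}$ of the columns of $A$, since every vector of $\mathcal{A}$ has the form $A\bm{x}$. By Lemma~\ref{L-LH}, membership $\bm{d}\in\mathcal{A}$ is equivalent to $\Delta=\mathbb{1}$, which gives the claimed existence criterion.

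For statement~(2), I would proceed in two steps. First, Lemma~\ref{L-LH} already supplies the explicit representation $\bm{d}=A(\bm{d}^{-}A)^{-}$ whenever $\Delta=\mathbb{1}$, so the vector $\bm{x}=(\bm{d}^{-}A)^{-}$ is a bona fide solution of \eqref{E-Axd}. Second, any solution $\bm{x}$ of \eqref{E-Axd} is automatically a solution of the inequality \eqref{I-Axd}, so Lemma~\ref{L-xdA} forces $\bm{x}\leq(\bm{d}^{-}A)^{-}$. These two facts together identify $(\bm{d}^{-}A)^{-}$ as the maximal solution.

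For statement~(3), I would invoke the argument already given in the Linear Dependence subsection: if $\bm{a}_{1},\ldots,\bm{a}_{n}$ is a minimal generating system for $\bm{d}$, then the representation of $\bm{d}$ as a linear combination of the $\bm{a}_{i}$ is unique, since any two distinct representations produce a strict inequality $x_{i}^{\prime}<x_{i}$ in some coordinate, allowing the term $x_{i}^{\prime}\bm{a}_{i}$ to be dropped and contradicting minimality. Applied to equation \eqref{E-Axd}, uniqueness of the representation is exactly uniqueness of $\bm{x}$, so the maximal solution from part~(2) is in fact the only solution.

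The steps are all routine once the preceding lemmas are in place; the only subtlety is ensuring that the earlier uniqueness argument, which was phrased for a general minimal generating system, transfers verbatim to the equation setting, and that one does not need any additional regularity hypothesis on the $\bm{a}_{i}$ beyond what the minimality condition already provides. I do not expect any serious obstacle.
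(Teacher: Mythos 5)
Your proposal is correct and follows essentially the same route as the paper, which likewise derives existence and the solution form from Lemma~\ref{L-LH}, maximality from Lemma~\ref{L-xdA}, and uniqueness from the unique representation of a vector over a minimal generating system; your write-up merely spells out the steps in more detail (and appropriately flags the regularity caveat in the minimality argument, which the paper also glosses over).
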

\begin{proof}
The existence condition and the form of a solution follows from Lemma~\ref{L-LH}. The result of Lemma~\ref{L-xdA} says that this solution is maximal. The uniqueness condition follows from representation of the vector as a unique linear combination of its minimal set of generators.
\end{proof}

Suppose that $\Delta>\mathbb{1}$. In this case equation \eqref{E-Axd} has no solution. However, we can define a pseudo-solution to \eqref{E-Axd} as a solution of the equation
$$
A\bm{x}
=
\Delta A(\bm{d}^{-}A)^{-},
$$
which always exists and takes the form
$$
\bm{x}_{0}
=
\Delta(\bm{d}^{-}A)^{-}.
$$

By Theorem~\ref{T-Lb}, the pseudo-solution yields the minimum deviation between the vectors $\bm{y}=A\bm{x}$ and the vector $\bm{d}$ in the sense of the metric $\rho$. When $\Delta=\mathbb{1}$, the pseudo-solution obviously coincides with the maximum solution.

Consider a problem of finding two vectors $\bm{x}_{1}$ and $\bm{x}_{2}$ that provide the minimum deviation between both sides of \eqref{E-Axd}, while satisfying the respective inequalities
$$
A\bm{x}\leq\bm{d},
\qquad
A\bm{x}\geq\bm{d}.
$$

It follows from Lemma~\ref{L-rhoA1A2d} that the problem has a solution that is given by
$$
\bm{x}_{1}
=
(\bm{d}^{-}A)^{-},
\qquad
\bm{x}_{2}
=
\Delta^{2}(\bm{d}^{-}A)^{-}.
$$

\subsection{General Solution}

To describe a general solution to equation \eqref{E-Axd}, we first give an auxiliary result that solves \eqref{E-Axd} when the vector $\bm{d}$ is linearly dependent on a subset of columns in the matrix $A$.

\begin{lemma}\label{L-AxdGS}
Let $A=(\bm{a}_{1},\ldots,\bm{a}_{n})$ be a matrix, $I$ be a subset of column indices of $A$, and $\bm{d}\in\mathop\mathrm{span}\{\bm{a}_{i}|i\in I\}$.

Then any vector $\bm{x}_{I}=(x_{i})$, where $x_{i}=(\bm{d}^{-}\bm{a}_{i})^{-}$ if $i\in I$, and $x_{i}\leq(\bm{d}^{-}\bm{a}_{i})^{-}$ otherwise, is a solution to \eqref{E-Axd}.
\end{lemma}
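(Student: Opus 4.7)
The plan is to prove $A\bm{x}_I = \bm{d}$ by splitting the matrix-vector product into the block of columns indexed by $I$ and the block indexed by its complement, and showing that the first block already produces $\bm{d}$ while the second contributes only terms bounded above by $\bm{d}$.

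For the first block, I would let $A_I$ denote the submatrix of $A$ formed by the columns $\bm{a}_i$ with $i\in I$. Since by hypothesis $\bm{d}\in\mathop{\mathrm{span}}\{\bm{a}_i \mid i\in I\}$, Lemma~\ref{L-LH} applied to $A_I$ yields $\bm{d} = A_I(\bm{d}^{-}A_I)^{-}$. The $i$-th component of $(\bm{d}^{-}A_I)^{-}$ is exactly $(\bm{d}^{-}\bm{a}_i)^{-}$, which coincides with $x_i$ for $i\in I$. Hence
$$
\bigoplus_{i\in I} x_i\bm{a}_i \;=\; A_I(\bm{d}^{-}A_I)^{-} \;=\; \bm{d}.
$$

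For the second block, I would invoke Lemma~\ref{L-xdA} applied to the single-column matrix whose only column is $\bm{a}_i$ (for $i\notin I$): the scalar inequality $y\bm{a}_i\leq\bm{d}$ is equivalent to $y\leq(\bm{d}^{-}\bm{a}_i)^{-}$. The standing hypothesis $x_i\leq(\bm{d}^{-}\bm{a}_i)^{-}$ therefore gives $x_i\bm{a}_i\leq\bm{d}$ for every $i\notin I$. Putting the two blocks together and using idempotency together with isotonicity of $\oplus$, I obtain
$$
A\bm{x}_I
\;=\;
\bigoplus_{i\in I}x_i\bm{a}_i \;\oplus\; \bigoplus_{i\notin I}x_i\bm{a}_i
\;=\;
\bm{d}\;\oplus\;\bigoplus_{i\notin I}x_i\bm{a}_i
\;\leq\;
\bm{d},
$$
while the extremal property of the idempotent sum also gives $A\bm{x}_I\geq\bigoplus_{i\in I}x_i\bm{a}_i=\bm{d}$. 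The two-sided comparison forces $A\bm{x}_I=\bm{d}$.

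I do not anticipate a serious obstacle; the argument is a clean assembly of Lemma~\ref{L-LH} (to handle the selected columns) and Lemma~\ref{L-xdA} in its one-column specialization (to bound each remaining column's contribution). The only subtle point worth a sentence is the edge case in which some $\bm{a}_i$ with $i\notin I$ has a nonzero entry at a position where $\bm{d}$ vanishes; there $\bm{d}^{-}\bm{a}_i=\infty$ and the bound $(\bm{d}^{-}\bm{a}_i)^{-}=\mathbb{0}$ simply forces $x_i=\mathbb{0}$, so its term disappears and the decomposition above still goes through without change.
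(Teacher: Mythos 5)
Your proof is correct and follows essentially the same route as the paper's: split the linear combination into the columns indexed by $I$, which reproduce $\bm{d}$ exactly by Lemma~\ref{L-LH} applied to the submatrix formed by those columns, and the remaining columns, whose contributions are bounded above by $\bm{d}$ via the one-column case of Lemma~\ref{L-xdA}. One caution on your closing remark: if some $\bm{a}_{i}$ with $i\notin I$ has a nonzero entry at a position where $\bm{d}$ vanishes, then by the paper's convention the corresponding entry of $\bm{d}^{-}$ is $\mathbb{0}$, not $\infty$, so $\bm{d}^{-}\bm{a}_{i}$ silently drops that constraint rather than blowing up; what actually forces $x_{i}=\mathbb{0}$ in that case is the paper's standing assumption that $A$ has already been put into the form consistent with $\bm{d}$, which zeroes such a column on the rows where $\bm{d}$ is nonzero and thereby makes $\bm{d}^{-}\bm{a}_{i}=\mathbb{0}$ and $(\bm{d}^{-}\bm{a}_{i})^{-}=\mathbb{0}$.
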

\begin{proof}
Since $\bm{d}\in\mathop\mathrm{span}\{\bm{a}_{i}|i\in I\}\subset\mathop\mathrm{span}\{\bm{a}_{1},\ldots,\bm{a}_{n}\}$, there is a solution $\bm{x}_{I}$ of equation \eqref{E-Axd}, and therefore,
$$
\bm{d}
=
A\bm{x}_{I}
=
\bigoplus_{i=1}^{n}x_{i}\bm{a}_{i}
=
\bigoplus_{i\in I}x_{i}\bm{a}_{i}
\oplus
\bigoplus_{i\not\in I}x_{i}\bm{a}_{i}.
$$

Furthermore, the condition $\bm{d}\in\mathop\mathrm{span}\{\bm{a}_{i}|i\in I\}$ yields an equality
$$
\bm{d}
=
\bigoplus_{i\in I}x_{i}\bm{a}_{i},
$$
which is valid when $x_{i}=(\bm{d}^{-}\bm{a}_{i})^{-}$ for all $i\in I$.

The remaining components with indices $i\not\in I$ must be set so as to satisfy inequalities
$$
\bm{d}
\geq
\bigoplus_{i\not\in I}x_{i}\bm{a}_{i}
\geq
x_{i}\bm{a}_{i}.
$$

It remains to solve the inequalities to conclude that for each $i\not\in I$, we can take any $x_{i}\leq(\bm{d}^{-}\bm{a}_{i})^{-}$.
\end{proof}

Let $I$ be a set of indices of those columns in $A$ that form a minimal generating system for the vector $\bm{d}$. We denote the set of all such index sets $I$ by $\mathcal{I}$. It is clear that $\mathcal{I}\neq\emptyset$ only when equation \eqref{E-Axd} has at least one solution.

By applying Lemma~\ref{L-AxdGS}, we arrive at the following result.

\begin{theorem}\label{T-GS}
The general solution to equation \eqref{E-Axd} is a (possible empty) family of solutions $\{\bm{x}_{I}| I\in\mathcal{I}\}$, where each solution $\bm{x}_{I}=(x_{i})$ is given by
\begin{equation}
\begin{split}
x_{i}
&=
(\bm{d}^{-}\bm{a}_{i})^{-},
\qquad
\text{if $i\in I$},
\\
x_{i}
&\leq
(\bm{d}^{-}\bm{a}_{i})^{-},
\qquad
\text{if $i\not\in I$}.
\end{split}
\label{S-xdaixdai}
\end{equation}
\end{theorem}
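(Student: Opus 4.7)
The plan is to prove the set equality between the solution set of \eqref{E-Axd} and the family $\{\bm{x}_{I}\mid I\in\mathcal{I}\}$ described by \eqref{S-xdaixdai}, by verifying the two inclusions separately.

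For the direction ``$\supseteq$'', I would invoke Lemma~\ref{L-AxdGS} directly. By the definition of $\mathcal{I}$, any $I\in\mathcal{I}$ indexes columns $\{\bm{a}_{i}\mid i\in I\}$ that form a minimal generating system for $\bm{d}$; in particular $\bm{d}\in\mathop\mathrm{span}\{\bm{a}_{i}\mid i\in I\}$, and the lemma asserts that every vector with components given by \eqref{S-xdaixdai} satisfies $A\bm{x}=\bm{d}$.

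For ``$\subseteq$'', let $\bm{x}=(x_{i})$ be an arbitrary solution of \eqref{E-Axd}. Since $A\bm{x}=\bm{d}$ entails $A\bm{x}\leq\bm{d}$, Lemma~\ref{L-xdA} gives the componentwise bound $x_{i}\leq(\bm{d}^{-}\bm{a}_{i})^{-}$ for every $i$. Let $E=\{i\mid x_{i}=(\bm{d}^{-}\bm{a}_{i})^{-}\}$ denote the set of saturated indices. The key step is to show that these saturated columns already reconstruct $\bm{d}$, namely that $\bm{d}=\bigoplus_{i\in E}x_{i}\bm{a}_{i}$. For any row $k$ with $d_{k}\ne\mathbb{0}$, the maximum in $d_{k}=\bigoplus_{i}x_{i}a_{ki}$ is attained at some index $i^{*}$, whence $x_{i^{*}}=d_{k}a_{k,i^{*}}^{-1}$; since $d_{k}^{-1}a_{k,i^{*}}\leq\bm{d}^{-}\bm{a}_{i^{*}}$, taking inverses yields $(\bm{d}^{-}\bm{a}_{i^{*}})^{-}\leq x_{i^{*}}$, and combined with the earlier bound this forces equality and $i^{*}\in E$. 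Hence the maximum in every nonzero component of $\bm{d}$ is realized inside $E$, establishing the identity.

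With this at hand, I prune $E$ to a subset $I$ that is minimal subject to $\bm{d}\in\mathop\mathrm{span}\{\bm{a}_{i}\mid i\in I\}$; such an $I$ exists because $E$ is finite and can be shrunk one element at a time. By construction $I\in\mathcal{I}$. For $i\in I\subseteq E$ one has $x_{i}=(\bm{d}^{-}\bm{a}_{i})^{-}$, while for $i\notin I$ the componentwise bound gives $x_{i}\leq(\bm{d}^{-}\bm{a}_{i})^{-}$, matching \eqref{S-xdaixdai} exactly, so $\bm{x}=\bm{x}_{I}$. I expect the principal obstacle to be the saturation identity above; rows with $d_{k}=\mathbb{0}$ contribute trivially thanks to the standing assumption that $A$ is consistent with $\bm{d}$, which guarantees that attainability of the maxima causes no issues with zero entries.
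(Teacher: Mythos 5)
Your proposal is correct, and it is actually more complete than the paper's own justification. The paper proves Theorem~\ref{T-GS} with a single sentence --- ``By applying Lemma~\ref{L-AxdGS}, we arrive at the following result'' --- which covers only the inclusion you label ``$\supseteq$'': that every vector of the form \eqref{S-xdaixdai} with $I\in\mathcal{I}$ solves \eqref{E-Axd}. The converse inclusion, which is what makes the family a \emph{general} solution, is left implicit in the paper, and your argument for it is sound: the componentwise bound $x_{i}\leq(\bm{d}^{-}\bm{a}_{i})^{-}$ from Lemma~\ref{L-xdA}, the saturation argument showing that for every row $k$ with $d_{k}\neq\mathbb{0}$ the attaining index $i^{*}$ satisfies $(\bm{d}^{-}\bm{a}_{i^{*}})^{-}\leq x_{i^{*}}$ (hence equality), and the pruning of the saturated set $E$ down to a minimal generating subset $I\in\mathcal{I}$ all go through; the one-element-at-a-time shrinking does yield genuine minimality because spans are monotone in the generating set. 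Two small points worth making explicit if you write this up: rows with $d_{k}=\mathbb{0}$ force the corresponding terms $x_{i}a_{ki}$ to vanish, so they do not disturb the identity $\bm{d}=\bigoplus_{i\in E}x_{i}\bm{a}_{i}$; and when $\bm{d}^{-}\bm{a}_{i}=\mathbb{0}$ the convention $(\mathbb{0})^{-}=\mathbb{0}$ together with column regularity still makes the bound $x_{i}\leq(\bm{d}^{-}\bm{a}_{i})^{-}$ correct. In short, your route through Lemma~\ref{L-AxdGS} coincides with the paper's for one direction, and supplies the missing converse that the paper takes for granted.
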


Let us examine a case when the family reduces to one solution set. Suppose that the columns in $A$ are linearly independent. Then there may exist only one subset of columns that form a minimal generating system for $\bm{d}$. If the subset coincides with the set of all columns, then the solution reduces to a unique vector $\bm{x}=(\bm{d}^{-}A)^{-}$.

As an illustration, consider equation \eqref{E-Axd} in the idempotent vector space $\mathbb{R}_{\max,+}^{2}$ under the conditions that
$$
A
=
\left(
	\begin{array}{cc}
		a_{11} & a_{12} \\
		a_{21} & a_{22}
	\end{array}
\right),
\qquad
\bm{d}
=
\left(
	\begin{array}{c}
		d_{1} \\
		d_{2}
	\end{array}
\right),
$$
where $a_{11},a_{12},a_{21},a_{22}>\mathbb{0}$ and $d_{1},d_{2}>\mathbb{0}$.

Suppose that $\Delta=\Delta_{A}(\bm{d})=(A(\bm{d}^{-}A)^{-})^{-}\bm{d}=\mathbb{1}$. The maximal solution of the equation takes the form
$$
\bm{x}
=
\left(
	\begin{array}{c}
		(\bm{d}^{-}\bm{a}_{1})^{-1} \\
		(\bm{d}^{-}\bm{a}_{2})^{-1}
	\end{array}
\right)
=
\left(
	\begin{array}{c}
		(d_{1}^{-1}a_{11}\oplus d_{2}^{-1}a_{21})^{-1} \\
		(d_{1}^{-1}a_{12}\oplus d_{2}^{-1}a_{22})^{-1}
	\end{array}
\right).
$$

If the vector $\bm{d}$ is not collinear with any of vectors $\bm{a}_{1}$ and $\bm{a}_{2}$, then the solution is unique (see Fig.~\ref{F-AxdS}, upper left).
\begin{figure}[ht]
\setlength{\unitlength}{1mm}
\begin{center}

\begin{picture}(30,40)

\put(0,5){\vector(1,0){30}}
\put(5,0){\vector(0,1){40}}

\put(5,5){\thicklines\vector(1,4){3}}
\put(9,17){\line(-1,0){5}}

\put(2,11){\thicklines\line(1,1){21}}
\multiput(3,12)(1,1){20}{\line(1,0){1}}

\put(5,5){\thicklines\vector(3,1){12}}
\put(17,9){\line(0,-1){5}}

\put(10,2){\thicklines\line(1,1){16}}
\multiput(11,3)(1,1){16}{\line(-1,0){1}}

\put(5,5){\thicklines\vector(3,4){18}}
\put(23,29){\line(-1,0){19}}
\put(23,29){\line(0,-1){25}}

\put(13,12){$\bm{a}_{1}$}
\put(6,21){$\bm{a}_{2}$}
\put(24,30){$\bm{d}$}

\put(18,1){$x_{1}$}
\put(0,23){$x_{2}$}

\end{picture}
\hspace{10\unitlength}
\begin{picture}(30,40)

\put(0,5){\vector(1,0){30}}
\put(5,0){\vector(0,1){40}}

\put(5,5){\thicklines\vector(1,4){3}}
\put(9,17){\line(-1,0){5}}

\put(2,11){\thicklines\line(1,1){24}}
\multiput(3,12)(1,1){23}{\line(1,0){1}}

\put(5,5){\thicklines\vector(3,1){12}}
\put(17,9){\line(0,-1){5}}

\put(10,2){\thicklines\line(1,1){16}}
\multiput(11,3)(1,1){16}{\line(-1,0){1}}

\put(5,5){\thicklines\vector(2,3){18}}
\put(23,32){\line(-1,0){19}}
\put(23,32){\line(0,-1){28}}

\put(13,12){$\bm{a}_{1}$}
\put(6,21){$\bm{a}_{2}$}
\put(20,34){$\bm{d}$}

\put(18,1){$x_{1}$}
\put(0,24){$x_{2}$}

\end{picture}
\hspace{10\unitlength}
\begin{picture}(30,40)

\put(0,5){\vector(1,0){30}}
\put(5,0){\vector(0,1){40}}

\put(5,5){\thicklines\vector(0,1){9}}
\put(9.5,18.5){\line(-1,0){5.5}}

\put(2.0,11){\thicklines\line(1,1){22}}
\put(2.1,11){\thicklines\line(1,1){22}}
\put(1.9,11){\thicklines\line(1,1){22}}

\put(5,5){\thicklines\vector(1,3){4.5}}

\put(5,5){\thicklines\vector(2,3){17.5}}
\put(22.5,31.5){\line(-1,0){18.5}}
\put(22.5,31.5){\line(0,-1){27.5}}

\put(0,15){$\bm{a}_{1}$}
\put(7,22){$\bm{a}_{2}$}
\put(19,34){$\bm{d}$}

\put(12,1){$x_{1}$}
\put(0,24){$x_{2}$}

\end{picture}
\end{center}
\caption{A unique (upper left) and non-unique (upper right and bottom) solutions to the linear equation $A\bm{x}=\bm{d}$ in $\mathbb{R}_{\max,+}^{2}$.}\label{F-AxdS}
\end{figure}
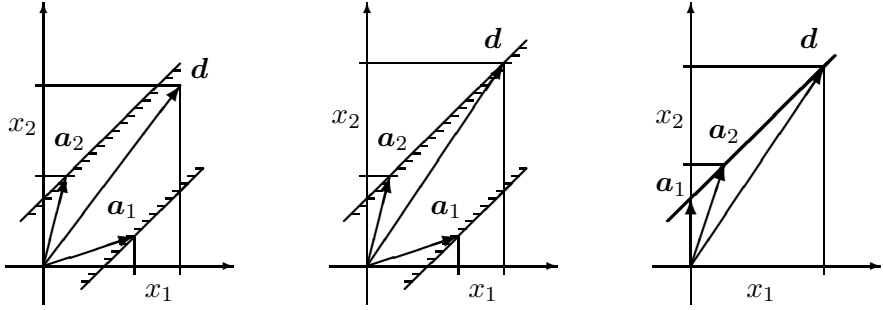

Two cases when equation \eqref{E-Axd} has more than one solution are shown on the Fig.~\ref{F-AxdS} (upper left and bottom).

In the upper left, the vector $\bm{d}$ is collinear with $\bm{a}_{1}$ and is not collinear with $\bm{a}_{2}$. The solution of the equation is any vector $\bm{x}$ with components
\begin{align*}
x_{1}
&=
(d_{1}^{-1}a_{11}\oplus d_{2}^{-1}a_{21})^{-1},
\\
x_{2}
&\leq
(d_{1}^{-1}a_{12}\oplus d_{2}^{-1}a_{22})^{-1}.
\end{align*}

In the case depicted in Fig.~\ref{F-AxdS} in the bottom, both vectors $\bm{a}_{1}$ and $\bm{a}_{2}$ are collinear with each other and with the vector $\bm{d}$. Under these conditions, there are two families of solution vectors $\bm{x}^{\prime}=(x_{1}^{\prime},x_{2}^{\prime})^{T}$ and $\bm{x}^{\prime\prime}=(x_{1}^{\prime\prime},x_{2}^{\prime\prime})$.

The vectors in the first family are given by
\begin{align*}
x_{1}^{\prime}
&=
(d_{1}^{-1}a_{11}\oplus d_{2}^{-1}a_{21})^{-1},
\\
x_{2}^{\prime}
&\leq
(d_{1}^{-1}a_{12}\oplus d_{2}^{-1}a_{22})^{-1},
\end{align*}
whereas those in the second family are given by
\begin{align*}
x_{1}^{\prime\prime}
&\leq
(d_{1}^{-1}a_{11}\oplus d_{2}^{-1}a_{21})^{-1},
\\
x_{2}^{\prime\prime}
&=
(d_{1}^{-1}a_{12}\oplus d_{2}^{-1}a_{22})^{-1}.
\end{align*}

\subsection{Systems of Equations and Inequalities}

Given matrices $A$ and $C$, and vectors $\bm{d}$ and $\bm{b}$, consider a problem to find vectors $\bm{x}$ that simultaneously solves an equation and an inequality combined into one system
\begin{equation}
\begin{split}
A\bm{x}
&=
\bm{d},
\\
C\bm{x}
&\leq
\bm{b}.
\end{split}
\label{S-AxdCxb}
\end{equation}

To solve the problem, we denote by $I$ a subset of indices for those columns in the matrix $A$ that form a minimal generating system for the vector $\bm{b}$, and by $\mathcal{I}$ a set of all such subsets. Furthermore, we define
$$
\widetilde{\mathcal{I}}
=
\{I\in\mathcal{I}|\bm{b}^{-}\bm{c}_{i}\leq\bm{d}^{-}\bm{a}_{i}, i\in I\}\subset\mathcal{I},
$$
where $\bm{a}_{i}$ and $\bm{c}_{i}$ are columns $i$ in the matrices $A$ and $C$, and recall the notation $\Delta=\Delta_{A}(\bm{d})=\sqrt{(A(\bm{d}^{-}A)^{-})^{-}\bm{d}}$.

\begin{lemma}\label{L-AxdCxb}
For any column regular matrices $A$ and $C$, and nonzero vectors $\bm{d}$ and $\bm{b}$, system \eqref{S-AxdCxb} has solutions if and only if $\Delta=\mathbb{1}$ and $\widetilde{\mathcal{I}}\ne\emptyset$. The general solution to the system is a (possible empty) family of solutions $\{\bm{x}_{I}|I\in\widetilde{\mathcal{I}}\}$, where each solution $\bm{x}_{I}=(x_{i})$ is given by
\begin{align*}
x_{i}
&=
(\bm{d}^{-}\bm{a}_{i})^{-},
&
\text{if $i\in I$},
\\
x_{i}
&\leq
(\bm{d}^{-}\bm{a}_{i}\oplus\bm{b}^{-}\bm{c}_{i})^{-},
&
\text{if $i\not\in I$}.
\end{align*}
\end{lemma}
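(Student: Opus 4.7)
The plan is to reduce the problem to a combination of two earlier results: Theorem~\ref{T-GS}, which parameterizes the general solution of $A\bm{x}=\bm{d}$ by minimal generating subsets $I\in\mathcal{I}$, and Lemma~\ref{L-xdA}, which gives the maximal solution of $C\bm{x}\leq\bm{b}$ in the form $\bm{x}\leq(\bm{b}^{-}C)^{-}$. Intersecting these two solution sets will produce both the existence criterion and the explicit form of each $\bm{x}_{I}$.

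First I would observe that solvability of $A\bm{x}=\bm{d}$ alone already forces $\Delta=\mathbb{1}$ by Theorem~\ref{T-EAxd}, and conversely the condition $\Delta=\mathbb{1}$ makes $\mathcal{I}\neq\emptyset$ and gives the full solution family described in \eqref{S-xdaixdai}. Then I would rewrite $C\bm{x}\leq\bm{b}$ component-wise using Lemma~\ref{L-xdA} as the system of scalar inequalities $x_{i}\leq(\bm{b}^{-}\bm{c}_{i})^{-}$ for all $i$. So a vector $\bm{x}_{I}$ from \eqref{S-xdaixdai} satisfies the full system \eqref{S-AxdCxb} iff for every $i\in I$ the fixed value $(\bm{d}^{-}\bm{a}_{i})^{-}$ is bounded above by $(\bm{b}^{-}\bm{c}_{i})^{-}$, i.e.\ $\bm{b}^{-}\bm{c}_{i}\leq\bm{d}^{-}\bm{a}_{i}$, and for every $i\notin I$ the free parameter $x_{i}$ must satisfy both $x_{i}\leq(\bm{d}^{-}\bm{a}_{i})^{-}$ and $x_{i}\leq(\bm{b}^{-}\bm{c}_{i})^{-}$.

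The small algebraic step that must be spelled out is the collapse of the two upper bounds for $i\notin I$ into one. In the semifield, the pair of inequalities $x_{i}\leq s^{-1}$ and $x_{i}\leq t^{-1}$ is equivalent to $x_{i}\leq(s\oplus t)^{-1}$, because order-reversing inversion turns the pointwise maximum $s\oplus t$ into the pointwise minimum of $s^{-1}$ and $t^{-1}$. Applying this with $s=\bm{d}^{-}\bm{a}_{i}$ and $t=\bm{b}^{-}\bm{c}_{i}$ yields exactly the bound $x_{i}\leq(\bm{d}^{-}\bm{a}_{i}\oplus\bm{b}^{-}\bm{c}_{i})^{-}$ claimed in the statement. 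Thus the set of $I\in\mathcal{I}$ that survive the $C$-constraints on the fixed coordinates is precisely $\widetilde{\mathcal{I}}$, and for each such $I$ the free coordinates are described by the formula above.

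Putting the two directions together: if $\Delta=\mathbb{1}$ and $\widetilde{\mathcal{I}}\neq\emptyset$, picking any $I\in\widetilde{\mathcal{I}}$ and defining $\bm{x}_{I}$ as in the lemma gives a solution of \eqref{S-AxdCxb}; conversely, any solution of \eqref{S-AxdCxb} is in particular a solution of $A\bm{x}=\bm{d}$, hence by Theorem~\ref{T-GS} has the form $\bm{x}_{I}$ for some $I\in\mathcal{I}$, and the $C$-side inequality forces $I\in\widetilde{\mathcal{I}}$ and forces the tightened bound on the free coordinates. The only subtle point is the inversion identity above; the rest is bookkeeping, so I do not expect a serious obstacle.
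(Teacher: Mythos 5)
Your proposal is correct and follows essentially the same route as the paper: intersect the solution family of $A\bm{x}=\bm{d}$ from Theorem~\ref{T-GS} with the bound $\bm{x}\leq(\bm{b}^{-}C)^{-}$ from Lemma~\ref{L-xdA}, read off the condition $\bm{b}^{-}\bm{c}_{i}\leq\bm{d}^{-}\bm{a}_{i}$ on the fixed coordinates $i\in I$ to obtain $\widetilde{\mathcal{I}}$, and merge the two upper bounds on the free coordinates. Your explicit remark that $(s\oplus t)^{-1}$ is the minimum of $s^{-1}$ and $t^{-1}$ is exactly the step the paper leaves implicit.
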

\begin{proof}
System \eqref{S-AxdCxb} is solvable if and only if there exists a solution to the equation in the system to satisfy the condition $\bm{x}\leq(\bm{b}^{-}C)^{-}$, which is equivalent to the inequality at \eqref{S-AxdCxb}.

According to Theorem~\ref{T-GS}, the solution to the equation is a family $\{\bm{x}_{I}| I\in\mathcal{I}\}$, where each member is given by \eqref{S-xdaixdai}. Consider a solution that corresponds to a subset $I\in\mathcal{I}$. The solution is the vectors $\bm{x}_{I}=(x_{i})$ with components $x_{i}=(\bm{d}^{-}\bm{a}_{i})^{-}$, if $i\in I$, and $x_{i}\leq(\bm{d}^{-}\bm{a}_{i})^{-}$, otherwise.

These vectors include solutions to the inequality at \eqref{S-AxdCxb} if and only if $(\bm{d}^{-}\bm{a}_{i})^{-}\leq(\bm{b}^{-}\bm{c}_{i})^{-}$ for all $i\in I$. By collecting all sets $I$ that provides this condition, we get the set $\widetilde{\mathcal{I}}$.

It remains to see that each set $I\in\widetilde{\mathcal{I}}$ gives a solution $\bm{x}_{I}=(x_{i})$, where $x_{i}=(\bm{d}^{-}\bm{a}_{i}\oplus\bm{b}^{-}\bm{c}_{i})=(\bm{d}^{-}\bm{a}_{i})^{-}$, if $i\in I$, and $x_{i}\leq(\bm{d}^{-}\bm{a}_{i}\oplus\bm{b}^{-}\bm{c}_{i})^{-}$, otherwise.
\end{proof}

\subsection{An Extended Equation}

Consider a problem to solve with respect to the unknown vector $\bm{x}$ an extended equation in the form
\begin{equation}
A\bm{x}\oplus\bm{b}
=
\bm{d},
\label{E-Axbd}
\end{equation}
where $A$ is given matrix, $\bm{d}$ and $\bm{b}$ are given vectors.

In what follows, we assume that $\bm{b}\leq\bm{d}$ since if it is not the case, then equation \ref{E-Axbd} obviously has no solutions.

We introduce two sets of indices $I_{1}=\{i|b_{i}<d_{i}\}$ and $I_{2}=\{i|b_{i}=d_{i}\}$. Let $A_{1}$ and $A_{2}$ be submatrices composed of the rows in $A$ with indices from $I_{1}$ and $I_{2}$, respectively. Similarly, we define subvectors $\bm{d}_{1}$ and $\bm{d}_{2}$ for the vector $\bm{d}$, and subvectors $\bm{b}_{1}$ and $\bm{b}_{2}$ for the vector $\bm{b}$.

Equation~\eqref{E-Axbd} is then equivalent to a system
\begin{align*}
A_{1}\bm{x}
&=
\bm{d}_{1},
\\
A_{2}\bm{x}
&\leq
\bm{b}_{2}.
\end{align*}

In the same way as above, we construct a set $\mathcal{I}_{1}$ to include all sets of indices of minimal systems of columns in $A_{1}$ that generate $\bm{d}_{1}$. Furthermore, we reduce the set $\mathcal{I}_{1}$ to a set $\widetilde{\mathcal{I}}_{1}$ consisting of those $I\in\mathcal{I}_{1}$ that provide common solutions to both equation and inequality. Finally, we introduce the notation $\Delta_{1}=\Delta_{A_{1}}(\bm{d}_{1})=(A(\bm{d}_{1}^{-}A)^{-})^{-}\bm{d}_{1}$.

\begin{lemma}\label{L-Axbd}
For any column regular matrix $A$, and nonzero vectors $\bm{d}$ and $\bm{b}$, equation \eqref{E-Axbd} has solutions if and only if $\Delta_{1}=\mathbb{1}$ and $\widetilde{\mathcal{I}}_{1}\ne\emptyset$. The general solution to the system is a (possible empty) family of solutions $\{\bm{x}_{I}|I\in\widetilde{\mathcal{I}}_{1}\}$, where each solution $\bm{x}_{I}=(x_{i})$ is given by
\begin{align*}
x_{i}
&=
(\bm{d}^{-}\bm{a}_{i})^{-},
\qquad
\text{if $i\in I$},
\\
x_{i}
&\leq
(\bm{d}^{-}\bm{a}_{i})^{-},
\qquad
\text{if $i\not\in I$}.
\end{align*}
\end{lemma}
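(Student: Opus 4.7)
The plan is to reduce equation~\eqref{E-Axbd} to the combined equation-plus-inequality system already displayed in the excerpt and then invoke Lemma~\ref{L-AxdCxb}. The reduction is a componentwise check. At row $i$, the equation $\{A\bm{x}\}_{i}\oplus b_{i}=d_{i}$ forces $b_{i}\leq d_{i}$ (already assumed). If $b_{i}<d_{i}$, then because $\oplus$ is idempotent and the left-hand side must equal $d_{i}>b_{i}$, one needs $\{A\bm{x}\}_{i}=d_{i}$; if $b_{i}=d_{i}$, the equation becomes $\{A\bm{x}\}_{i}\leq d_{i}=b_{i}$. Collecting rows with $b_{i}<d_{i}$ into $I_{1}$ and rows with $b_{i}=d_{i}$ into $I_{2}$ yields the system $A_{1}\bm{x}=\bm{d}_{1}$, $A_{2}\bm{x}\leq\bm{b}_{2}$. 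The converse direction is immediate, since reassembling the row blocks recovers \eqref{E-Axbd}.

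Next I would apply Lemma~\ref{L-AxdCxb} to this system by substituting $A_{1}$ for $A$, $\bm{d}_{1}$ for $\bm{d}$, $A_{2}$ for $C$, and $\bm{b}_{2}$ for $\bm{b}$. The existence criterion becomes $\Delta_{A_{1}}(\bm{d}_{1})=\Delta_{1}=\mathbb{1}$ together with nonemptyness of the set $\{I\in\mathcal{I}_{1}\mid \bm{b}_{2}^{-}\bm{c}_{i}\leq\bm{d}_{1}^{-}\bm{a}_{i}^{(1)},\ i\in I\}$, where $\bm{a}_{i}^{(1)}$ and $\bm{c}_{i}$ denote the $i$-th columns of $A_{1}$ and $A_{2}$; this set is precisely $\widetilde{\mathcal{I}}_{1}$ as defined in the excerpt. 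Lemma~\ref{L-AxdCxb} then supplies the family $\{\bm{x}_{I}\mid I\in\widetilde{\mathcal{I}}_{1}\}$ with $x_{i}=(\bm{d}_{1}^{-}\bm{a}_{i}^{(1)})^{-}$ for $i\in I$ and $x_{i}\leq(\bm{d}_{1}^{-}\bm{a}_{i}^{(1)}\oplus\bm{b}_{2}^{-}\bm{c}_{i})^{-}$ otherwise.

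Finally I would collapse these expressions to the compact form in the statement. Because rows in $I_{2}$ satisfy $b_{i}=d_{i}$, we have $\bm{b}_{2}=\bm{d}_{2}$, so
\[
\bm{d}_{1}^{-}\bm{a}_{i}^{(1)}\oplus\bm{b}_{2}^{-}\bm{c}_{i}
=\bm{d}_{1}^{-}\bm{a}_{i}^{(1)}\oplus\bm{d}_{2}^{-}\bm{c}_{i}
=\bm{d}^{-}\bm{a}_{i},
\]
the last equality being the row-partition decomposition of the inner product. For $i\in I\in\widetilde{\mathcal{I}}_{1}$, the defining inequality $\bm{b}_{2}^{-}\bm{c}_{i}\leq\bm{d}_{1}^{-}\bm{a}_{i}^{(1)}$ further collapses this to $\bm{d}_{1}^{-}\bm{a}_{i}^{(1)}$, and inversion then yields $x_{i}=(\bm{d}^{-}\bm{a}_{i})^{-}$ for $i\in I$ and $x_{i}\leq(\bm{d}^{-}\bm{a}_{i})^{-}$ for $i\notin I$, matching the statement. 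The main obstacle is the bookkeeping between the row-split data $(A_{1},A_{2},\bm{d}_{1},\bm{d}_{2},\bm{b}_{2})$ and the original $(A,\bm{d},\bm{b})$, and verifying that the hybrid expression $\bm{d}_{1}^{-}\bm{a}_{i}^{(1)}\oplus\bm{b}_{2}^{-}\bm{c}_{i}$ produced by Lemma~\ref{L-AxdCxb} telescopes, via $\bm{b}_{2}=\bm{d}_{2}$, to the uniform form $\bm{d}^{-}\bm{a}_{i}$; once this observation is in place, the rest is a direct translation.
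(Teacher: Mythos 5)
Your proposal is correct and follows essentially the same route as the paper: reduce equation \eqref{E-Axbd} to the row-partitioned system $A_{1}\bm{x}=\bm{d}_{1}$, $A_{2}\bm{x}\leq\bm{b}_{2}$, apply Lemma~\ref{L-AxdCxb} with $C=A_{2}$ and $\bm{b}=\bm{b}_{2}$, and use $\bm{b}_{2}=\bm{d}_{2}$ to telescope $\bm{d}_{1}^{-}\bm{a}_{i}^{1}\oplus\bm{b}_{2}^{-}\bm{a}_{i}^{2}$ into $\bm{d}^{-}\bm{a}_{i}$. Your componentwise justification of the equivalence between \eqref{E-Axbd} and the split system is a detail the paper merely asserts, but it is the intended argument.
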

\begin{proof}
By applying Lemma~\ref{L-AxdCxb}, we immediately get the existence conditions as well as a general solution in the form of family $\{\bm{x}_{I}|I\in\widetilde{\mathcal{I}}_{1}\}$ with $\bm{x}_{I}=(x_{i})$ given by
\begin{align*}
x_{i}
&=
(\bm{d}_{1}^{-}\bm{a}_{i}^{1}\oplus\bm{b}_{2}^{-}\bm{a}_{i}^{2})^{-},
&
\text{if $i\in I$},
\\
x_{i}
&\leq
(\bm{d}_{1}^{-}\bm{a}_{i}^{1}\oplus\bm{b}_{2}^{-}\bm{a}_{i}^{2})^{-},
&
\text{if $i\not\in I$},
\end{align*}
where $\bm{a}_{i}^{1}$ and $\bm{a}_{i}^{2}$ denote columns $i$ in $A_{1}$ and $A_{2}$.

Since $\bm{b}_{2}=\bm{d}_{2}$, we have $\bm{d}_{1}^{-}\bm{a}_{i}^{1}\oplus\bm{b}_{2}^{-}\bm{a}_{i}^{2}=\bm{d}^{-}\bm{a}_{i}$ which leads to the solution in the desired form.
\end{proof}

An illustration of solutions to equation \eqref{E-Axbd} in $\mathbb{R}_{\max,+}^{2}$ are given in Fig.~\ref{F-Axbd}, where the set of vectors $\bm{y}=A\bm{x}\oplus\bm{b}$ for all $\bm{x}\in\mathbb{R}_{\max,+}^{2}$ is represented with a shaded part of the strip corresponding to the linear span of columns in $A$. In the case when the vector $\bm{b}$ is outside the strip (upper right and bottom), this set is extended by adding vertical segments drawn from the end point of $\bm{b}$ to the strip. 
\begin{figure}[ht]
\setlength{\unitlength}{1mm}
\begin{center}

\begin{picture}(33,45)

\put(0,5){\vector(1,0){33}}
\put(5,0){\vector(0,1){45}}

\put(5,5){\thicklines\vector(1,4){4.7}}

\put(5,5){\thicklines\vector(2,-1){5}}

\put(1,15){\line(1,1){25}}

\multiput(16.5,30.5)(1,1){10}{\line(1,0){1}}
\put(16,30){\thicklines\line(1,1){10}}

\put(7.5,0){\line(1,1){22}}

\multiput(23.5,16)(1,1){8}{\line(-1,0){1}}
\put(23.5,16){\thicklines\line(1,1){8}}

\put(5,5){\thicklines\vector(3,4){20}}

\put(5,5){\thicklines\vector(1,1){11}}

\put(16,16){\thicklines\line(0,1){14}}
\multiput(16,16)(0,1){15}{\line(1,0){1}}

\put(16,16){\thicklines\line(1,0){7.5}}
\multiput(16,16)(1,0){8}{\line(0,1){1}}

\put(13,0){$\bm{a}_{1}$}
\put(7,28){$\bm{a}_{2}$}

\put(26,33){$\bm{d}$}
\put(19,19){$\bm{b}$}

\end{picture}
\hspace{10\unitlength}
\begin{picture}(33,45)

\put(0,5){\vector(1,0){33}}
\put(5,0){\vector(0,1){45}}

\put(5,5){\thicklines\vector(1,4){4.7}}

\put(5,5){\thicklines\vector(2,-1){5}}

\put(1,15){\line(1,1){25}}

\multiput(23.5,37.5)(1,1){4}{\line(1,0){1}}
\put(23,37){\thicklines\line(1,1){4.5}}

\put(7.5,0){\line(1,1){22}}

\multiput(24,16.5)(1,1){7}{\line(-1,0){1}}
\put(23,15.5){\thicklines\line(1,1){8}}

\put(5,5){\thicklines\vector(3,4){23}}

\put(5,5){\thicklines\vector(4,1){18}}

\multiput(23,17.5)(0,1){20}{\line(1,0){1}}
\put(23,9.5){\thicklines\line(0,1){27.5}}

\put(13,0){$\bm{a}_{1}$}
\put(7,28){$\bm{a}_{2}$}

\put(30,37){$\bm{d}$}
\put(26,9){$\bm{b}$}

\end{picture}
\hspace{10\unitlength}
\begin{picture}(33,45)

\put(0,5){\vector(1,0){33}}
\put(5,0){\vector(0,1){45}}

\put(5,5){\thicklines\vector(1,4){4.7}}

\put(5,5){\thicklines\vector(2,-1){5}}

\put(1,15){\line(1,1){25}}

\multiput(23.5,37.5)(1,1){4}{\line(1,0){1}}
\put(23,37){\thicklines\line(1,1){4.5}}

\put(7.5,0){\line(1,1){25}}

\multiput(24,16.5)(1,1){8}{\line(-1,0){1}}
\put(23,15.5){\thicklines\line(1,1){8}}

\put(5,5){\thicklines\vector(2,1){18}}

\put(5,5){\thicklines\vector(4,1){18}}

\multiput(23,17.5)(0,1){20}{\line(1,0){1}}
\put(23,9.5){\thicklines\line(0,1){27.5}}

\put(13,0){$\bm{a}_{1}$}
\put(7,28){$\bm{a}_{2}$}

\put(26,14){$\bm{d}$}
\put(26,8){$\bm{b}$}

\end{picture}
\end{center}
\caption{Solutions to the equation $A\bm{x}\oplus\bm{b}=\bm{d}$ in $\mathbb{R}_{\max,+}^{2}$}\label{F-Axbd}
\end{figure}
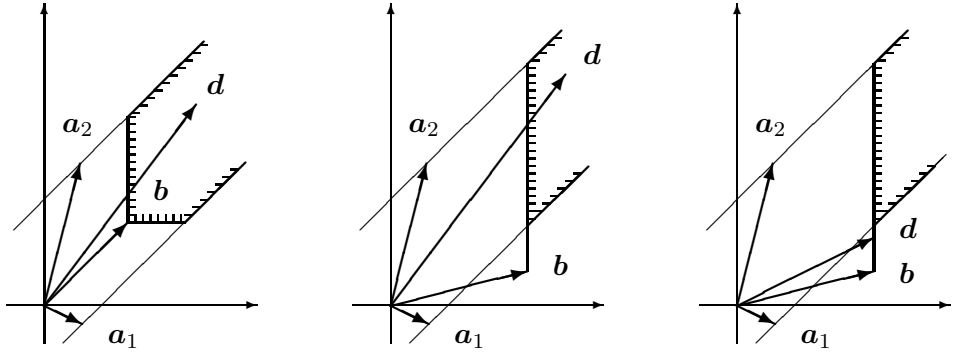

Note that a solution of equation \eqref{E-Axbd} may exist when equation \eqref{E-Axd} has no solutions as shown in Fig.~\ref{F-Axbd} (bottom).

\bibliographystyle{utphys}

\bibliography{Solution_of_linear_equations_and_inequalities_in_idempotent_vector_spaces}

\end{document}